\newtheorem{theorem}{Theorem}
\newtheorem{lemma}{Lemma}
\newtheorem{remark}{Remark}
\newtheorem{proposition}{Proposition}
\newtheorem{corollary}{Corollary}
\newtheorem{definition}{Definition}
\newcommand \p {\partial}
\newcommand \R {\mathbb{R}}
\newcommand \N {\mathbb{N}}
\renewcommand \L {\mathrm{L}}
\newcommand \W {\mathrm{W}}
\newcommand \WW {\mathbf{W}}
\newcommand \WWW {\mathbb{W}}
\newcommand \V {\mathrm{V}}
\newcommand \VV {\mathbf{V}}
\newcommand \LL {\mathbf{L}}
\newcommand \LLL {\mathbb{L}}
\renewcommand \H {\mathrm{H}}
\newcommand \I {\mathrm{I}}
\renewcommand \d {\mathrm{d}}
\renewcommand \det {\mathrm{det}}
\newcommand \trace {\mathrm{tr}}
\newcommand \cof {\mathrm{cof}}
\DeclareMathOperator{\divg}{div}
\gdef\SetFigFont#1#2#3#4#5{%
  \reset@font\fontsize{#1}{#2pt}%
  \fontfamily{#3}\fontseries{#4}\fontshape{#5}%
  \selectfont}%
\title{Almost global existence of weak solutions for the nonlinear elastodynamics system for a class of strain energies}
\author{S\'ebastien Court\thanks{Institute for Mathematics and Scientific Computing, Karl-Franzens-Universit\"{a}t, Heinrichstr. 36, 8010 Graz, Austria, email: {\tt sebastien.court@uni-graz.at}.} \and Karl Kunisch\thanks{Institute for Mathematics and Scientific Computing, Karl-Franzens-Universit\"{a}t, Heinrichstr. 36, 8010 Graz, Austria, and Radon Institute, Austrian Academy of Sciences, email: {\tt karl.kunisch@uni-graz.at}.}}
\begin{document}

\maketitle

\begin{abstract}
The aim of this paper is to prove the existence of almost global weak solutions for the unsteady nonlinear elastodynamics system in dimension $d=2$ or $3$, for a range of strain energy density functions satisfying some given assumptions. These assumptions are satisfied by the main strain energies generally considered. The domain is assumed to be bounded, and mixed boundary conditions are considered. Our approach is based on a nonlinear parabolic regularization technique, involving the $p$-Laplace operator. First we prove the existence of a local-in-time solution for the regularized system, by a fixed point technique. Next, using an energy estimate, we show that if the data are small enough,  bounded by $\varepsilon >0$, then the maximal time of existence does not depend on the parabolic regularization parameter, and the behavior of the lifespan $T$ is $\gtrsim \log (1/\varepsilon)$, defining what we call here {\it almost global existence}. The solution is thus obtained by passing this parameter to zero. The key point of our proof is due to recent nonlinear Korn's inequalities proven by Ciarlet \& Mardare in $\W^{1,p}$ spaces, for $p>2$.
\end{abstract}

\noindent{\bf Keywords:} Nonlinear elasticity, Elastodynamics system, Hyperelastic materials, Parabolic regularization, $p$-Laplacian, Hyperbolic PDE, Global weak solutions.\\
\hfill \\
\noindent{\bf AMS subject classifications (2010):} 74B20, 35L70, 35L53, 74H20, 35A01, 35D30, 35K92.  

\tableofcontents

\section{Introduction}

\subsection{The model}
The elastodynamics system we consider in this paper is a hyperbolic partial differential equation combined with boundary conditions -- when the domain has a boundary -- and initial conditions, whose unknown is the displacement inside a deformable body. We denote by $u(\cdot,t)$ the displacement field at time $t$ with respect to the reference configuration represented by a bounded domain $\Omega$ of $\R^d$ (d = $2$ or $3$). It is assumed to obey the laws of elasticity (see\cite{Marsden} or~\cite{Ciarlet} for instance). The density of the body in the reference configuration is denoted by $\rho$. It is positive, and for a sake of simplicity, we assume it to be constant. We further assume that the boundary of the domain is split into two parts denoted by $\Gamma_D$ and $\Gamma_N$. For mathematical convenience, we will consider that the displacement is null on $\Gamma_D$, and that the Lebesgue measure of this boundary is not equal to zero, namely: $|\Gamma_D | > 0$. \textcolor{black}{Nevertheless, the pure traction case can also be considered (see Remark~\ref{rktraction}).}

Mixed boundary conditions are considered on $\p \Omega = \Gamma_D \sqcup \Gamma_N$. A homogeneous Dirichlet condition is imposed on $\Gamma_D$, and a non-homogeneous Neumann-type boundary condition is considered on $\Gamma_N$. For $0<T \leq \infty$, the system governing the evolution of the displacement $u$ is the following:
\begin{eqnarray} \label{mainsys}
\left\{\begin{array} {rcl}
	\displaystyle \rho \ddot{u} - \divg ((\I + \nabla u) \Sigma(u)) = f & &  \text{in } \Omega \times (0,T), \\
	u = 0, & &  \text{on } \Gamma_D\times (0,T), \\
	(\I + \nabla u) \Sigma(u)n = g & &  \text{on } \Gamma_N  \times (0,T),\\
	\displaystyle u(\cdot,0) = u_0, \quad \dot{u}(\cdot,0) = u_1 & & \text{in } \Omega.
\end{array} \right.
\end{eqnarray}
In this system, the symbol $\I$ denotes the identity matrix of $\R^{d\times d}$, and $\Sigma$ denotes the so-called second Piola-Kirchhoff stress tensor, namely the derivative of the strain energy density function $\mathcal{W}$ with respect to the Green--St-Venant strain tensor $E$:
\begin{eqnarray*}
\Sigma(u)  =  \frac{\p \mathcal{W}}{\p E}(E(u)), & & E(u) = \frac{1}{2}\left((\I + \nabla u)^T(\I+\nabla u) - \I  \right).
\end{eqnarray*}
For the choice of the strain energy, a classical example is given by the St-Venant--Kirchhoff model, for which 
\begin{eqnarray*}
\mathcal{W}(E) & = & \mu_L \trace(E^2) + \frac{\lambda_L}{2} \trace(E)^2,
\end{eqnarray*}
where $\mu_L$ and $\lambda_L$ denote the classical Lam\'e coefficients. The functions $u_0$, $u_1$, $f$ and $g$ are data of the problem.

\subsection{Main result}
The question of local-in-time existence for the elastodynamics system has been first addressed in~\cite{HuKaMa1977}, for data reduced to initial conditions, and then in~\cite{Tougeron1988} for small Neumann data, both in the framework of strong solutions. A negative answer about the question of global existence has been given in~\cite{Knops1979}, and blow-up of strong solutions has been proven in~\cite{John1984} and~\cite{Gawinecki2008}, under particular assumptions on the strain energy density function. The global existence of large rigid displacements (but in the context of linearized elasticity) has been obtained in~\cite{Grandmont2002} for small data, and in~\cite{grandmont2007} for small strains. Almost global existence, that is to say lifespan depending on the bounds on the data, has been obtained in~\cite{John1988}, \cite{Klainerman}, and in~\cite{Xin2008} for the St-Venant--Kirchhoff model, and in~\cite{Lei2015} in the context of incompressible materials. 

For incompressible materials, the literature for global existence is abundant. Let us mention, for instance, the works of~\cite{Ebin1993, Ebin1996, Becca-thesis2003, Becca2005, Becca2007, Lei2016}, in the case of small data, and more recently in Eulerian formulation the results of \cite{Yin2016}, and~\cite{LeiWang} for Hookean elasticity. Finally, we mention the paper of~\cite{Zhang2009} where a locally distributed dissipation is added to the model, in order to stabilize the system.

The difficult question of global existence has been addressed in~\cite{Sideris1996, Sideris1996bis, Agemi} under the so-called {\it null condition}, and in~\cite{Sideris2000} under a {\it nonresonance} assumption for the stored energy.

As far as we know, regarding the class of general strain energies we consider in this article, no result concerning the existence of almost global solutions for the elastodynamics system has been obtained until now. Besides the complexity due to the nonlinearity of this system, the main difficulty lies in the control, by the total strain energy, of the gradient of the displacement. This difficulty can be now addressed thanks to the recent nonlinear Korn's inequalities proved in~\cite{Ciarlet2015}, and also in~\cite{Musesti2016}. More specifically, if $\det(\I+\nabla u) > 0$ almost everywhere in $\Omega$, the inequality given in the part~$(b)$ of Theorem~3 of~\cite{Ciarlet2015} yields in particular that
\begin{eqnarray*}
	\| u \|^p_{\left[\W^{1,p}(\Omega)\right]^d} & \leq & C \| E(u) \|^{p/2}_{[L^{p/2}(\Omega)]^{d\times d}},
\end{eqnarray*}
where $p>2$, and where the constant $C>0$ does not depend on $u \in \left[\W^{1,p}(\Omega)\right]^d$. In the example of the St-Venant--Kirchhoff model, the total strain energy on $\Omega$ controls the $\L^2$-norm of the tensor $E$, and thus for this case the exponent $p=4$ is well-chosen. This Korn's inequality is the key point leading to the main result of our work, namely:

\begin{theorem}
Let $\Omega$ be a bounded domain of $\R^d$ with $d =2$ or $3$. Assume that its boundary $\p \Omega = \Gamma_D \sqcup \Gamma_N$ is Lipschitz, and that $\Gamma_D$ is non-empty and relatively open in $\p \Omega$. Let be $p > 2$, $p \geq d$, and define $p' = p/(p-1)$. Assume that there exists $C>0$ such that, for all $E \in \displaystyle [\L^{p/2}(\Omega)]^{d\times d}$, the total strain energy satisfies
\begin{eqnarray*}
\int_{\Omega} \mathcal{W}(E) \,\d \Omega & \geq &  C\|E\|^{p/2}_{[ \L^{p/2}(\Omega)]^{d\times d}}.
\end{eqnarray*}
Assume further that $\mathcal{W}$ is of class $\mathcal{C}^1$ on $\left[\L^{p/2}(\Omega)\right]^{d\times d}$. Denoting by $\check{\Sigma}$ its differential and by $E$ the Green--St-Venant tensor, we assume that the tensor field $\check{\Sigma}\circ E$ is symmetric and locally $\alpha$-H\"{o}lderian on $\L^p\left(0,T;[\W^{1,p}]^d\right)$ for all $T>0$, with $\alpha = \min(1,(p-2)/2)$. 
Assume that $u_0 \in \displaystyle [\W^{1,p}(\Omega)]^d$, ${u_0}_{|\Gamma_D} \equiv 0$ and that $\det(\I+\nabla u_0) >0$ almost everywhere in $\Omega$. Let be $T>0$. Then there exists a constant $C >0$ independent of $T$ such that, if 
\begin{eqnarray*}
\int_\Omega \mathcal{W}(E(u_0))\,\d \Omega + \|u_1\|_{[\L^2(\Omega)]^d} + 
\| f \|_{\L^{2}(0,T;[\L^2(\Omega)]^d} +
\| g \|_{\L^{2}(0,T;[\H^{1/2}(\Gamma_N)']^d)} & \leq & C\exp(-CT), 
\end{eqnarray*}
then system~\eqref{mainsys} admits a solution $u$ such that
\begin{eqnarray*}
u \in \L^\infty(0,T;[\W^{1,p}(\Omega)]^d), 
\quad \dot{u} \in \L^{\infty}(0,T;[\L^{2}(\Omega)]^d),
\quad \ddot{u} \in \L^{p'}(0,T;[\W^{1,p}(\Omega)']^d).
\end{eqnarray*}
\end{theorem}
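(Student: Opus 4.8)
The plan is to realize the parabolic regularization strategy announced in the abstract in four stages. For $\eta>0$ I would replace the hyperbolic system by the nonlinearly parabolic one obtained by adding a $p$-Laplacian dissipation acting on the velocity $\dot{u}$,
\begin{equation*}
\rho\ddot{u} - \eta\,\divg\big(|\nabla\dot{u}|^{p-2}\nabla\dot{u}\big) - \divg\big((\I+\nabla u)\Sigma(u)\big) = f \quad\text{in } \Omega\times(0,T),
\end{equation*}
keeping $u=0$ on $\Gamma_D$, the regularized traction $(\I+\nabla u)\Sigma(u)n + \eta|\nabla\dot{u}|^{p-2}\nabla\dot{u}\,n = g$ on $\Gamma_N$, and the initial data. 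For fixed $\eta$ this is a genuinely parabolic problem for $v:=\dot{u}$ whose leading part is the monotone, coercive $p$-Laplacian acting from $[\W^{1,p}(\Omega)]^d$ into its dual; it provides the extra regularity $\dot{u}\in\L^p(0,T;[\W^{1,p}(\Omega)]^d)$ needed to give meaning to the boundary traces and to the elastic flux.

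Next I would construct a local-in-time solution of the regularized system, at fixed $\eta$, by a Banach fixed point argument. Freezing the elastic nonlinearity at a state $\bar u$, the map $\bar u\mapsto u$ is defined by solving the problem $\rho\dot v-\eta\,\divg(|\nabla v|^{p-2}\nabla v)=f+\divg((\I+\nabla\bar u)\Sigma(\bar u))$, $v=\dot u$, which is solvable by a Galerkin scheme combined with the theory of monotone operators for the $p$-Laplacian. The contraction property on a short interval $[0,\tau_\eta]$ follows from the assumed local $\alpha$-Hölder continuity of $\check{\Sigma}\circ E$ on $\L^p(0,T;[\W^{1,p}]^d)$ with $\alpha=\min(1,(p-2)/2)$, which is exactly what controls the difference of two elastic right-hand sides in the relevant dual norm.

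The core of the argument, and the step I expect to be the main obstacle, is the energy estimate yielding an $\eta$-independent lifespan. Testing the regularized equation with $\dot u$ and using the hyperelastic identity $\tfrac{d}{dt}\int_\Omega\mathcal{W}(E(u))\,\d\Omega=\int_\Omega(\I+\nabla u)\Sigma(u):\nabla\dot u\,\d\Omega$ (valid because $\check\Sigma\circ E$ is symmetric), I would obtain
\begin{equation*}
\frac{d}{dt}\mathcal{E}(t)+\eta\|\nabla\dot u\|^p_{[\L^p(\Omega)]^{d\times d}}=\langle f,\dot u\rangle+\langle g,\dot u\rangle_{\Gamma_N},\qquad \mathcal{E}:=\frac{\rho}{2}\|\dot u\|^2_{[\L^2(\Omega)]^d}+\int_\Omega\mathcal{W}(E(u))\,\d\Omega.
\end{equation*}
The coercivity hypothesis together with the nonlinear Korn inequality of Ciarlet--Mardare then gives $\mathcal{E}(t)\gtrsim\|\dot u\|^2_{[\L^2]^d}+\|u\|^p_{[\W^{1,p}]^d}$, so that $\mathcal{E}$ controls the target norms, \emph{provided} $\det(\I+\nabla u)>0$ almost everywhere. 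Absorbing $\langle f,\dot u\rangle$ and the Neumann pairing $\langle g,\dot u\rangle_{\Gamma_N}$ by Young's inequality (the latter via the duality $[\H^{1/2}(\Gamma_N)']^d$--$[\H^{1/2}(\Gamma_N)]^d$ and a trace estimate) produces a differential inequality $\mathcal{E}'(t)\leq C\mathcal{E}(t)+(\text{data})$, hence $\mathcal{E}(t)\leq e^{Ct}(\mathcal{E}(0)+\text{data})$ by Gronwall. A bootstrap then closes the argument: as long as $\mathcal{E}$ stays below a threshold $\delta_0$, the embedding $[\W^{1,p}]^d\hookrightarrow[\mathcal{C}^0]^d$ (using $p\geq d$) keeps $\det(\I+\nabla u)$ positive so that Korn applies with constants uniform in $\eta$, and the hypothesis $\mathcal{E}(0)+\text{data}\leq Ce^{-CT}$ is precisely what forces $e^{CT}(\mathcal{E}(0)+\text{data})\leq\delta_0$ on all of $[0,T]$, making the lifespan independent of $\eta$ and explaining the $\log(1/\varepsilon)$ behaviour. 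I anticipate two genuinely delicate points here: the uniform-in-$\eta$ control of the Neumann term, since the available dissipation is $\eta$-weighted; and the persistence of $\det(\I+\nabla u)>0$, since $\W^{1,p}$-smallness does not directly bound $\nabla u$ in $\L^\infty$, so positivity must be propagated at fixed $\eta$ via the extra parabolic regularity and a continuation argument, then transferred uniformly through the energy smallness.

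Finally I would pass to the limit $\eta\to0$. The $\eta$-uniform bounds give $u^\eta$ bounded in $\L^\infty(0,T;[\W^{1,p}]^d)$, $\dot u^\eta$ in $\L^\infty(0,T;[\L^2]^d)$, and $\ddot u^\eta$ in $\L^{p'}(0,T;[\W^{1,p}(\Omega)']^d)$ (read off the equation), with $\eta\|\nabla\dot u^\eta\|_p^p$ bounded. I extract weak-$*$ limits; the regularization term vanishes in $\L^{p'}(0,T;[\W^{1,p}(\Omega)']^d)$ since its norm is $O(\eta^{1/p})$. The only nontrivial passage is the non-monotone elastic term, which requires strong convergence of $\nabla u^\eta$. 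Using the compact embedding $[\L^2(\Omega)]^d\hookrightarrow\hookrightarrow[\W^{1,p}(\Omega)']^d$ for $\dot u^\eta$ together with the uniform $\W^{1,p}$ bound, an Aubin--Lions--Simon argument yields strong convergence of $u^\eta$ in low-order norms; upgrading this to strong convergence of the gradients, so as to identify the limit of $(\I+\nabla u^\eta)\Sigma(u^\eta)$ through the continuity of $\check\Sigma\circ E$, is the delicate technical point of this stage, which I would attempt by a Cauchy-in-$\eta$ estimate exploiting again the Hölder structure and the smallness of the solution. The resulting limit then solves \eqref{mainsys} weakly with the announced regularity.
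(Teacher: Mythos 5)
Your overall architecture coincides with the paper's: regularize by adding a $p$-Laplacian acting on $\dot u$, prove local existence at fixed regularization parameter by a fixed point argument built on the auxiliary $p$-Laplace evolution problem, derive the energy identity $\frac{\d}{\d t}\big(\frac{\rho}{2}\|\dot u\|^2_{\LL^2(\Omega)}+\int_\Omega\mathcal{W}(E(u))\,\d\Omega\big)+\kappa\|\nabla\dot u\|^p_{\LLL^p(\Omega)}=\langle f,\dot u\rangle+\langle g,\dot u\rangle_{\Gamma_N}$, convert it into a bound on $\|u\|_{\VV^p(\Omega)}$ via assumption $\mathbf{A1}$ and the Ciarlet--Mardare nonlinear Korn inequality, run Gr\"onwall to get the $\exp(-CT)$ smallness condition, propagate $\det(\I+\nabla u)>0$ by a continuation argument based on the continuity of $t\mapsto\det(\I+\nabla u(\cdot,t))$ in $\L^1(\Omega)$ (this is exactly the paper's Lemma~\ref{lemmaunifdet} and the $T_{max}(\kappa)$ contradiction argument), and finally let the regularization parameter go to zero using the $O(\kappa^{1/p})$ decay of the dissipative term. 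Your flagged concerns about the Neumann pairing and about identifying the limit of the non-monotone elastic term are legitimate (the paper itself is terse on both points), and your remark that $\W^{1,p}$-smallness does not give $\L^\infty$ control of $\nabla u$ is well taken and consistent with the paper's reliance on an a.e.\ lower bound rather than a pointwise continuous one.

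The one genuine gap is the local existence step. You propose to obtain the fixed point by a Banach contraction, claiming that ``the contraction property \dots follows from the assumed local $\alpha$-H\"older continuity of $\check\Sigma\circ E$ with $\alpha=\min(1,(p-2)/2)$.'' For $2<p<4$ one has $\alpha=(p-2)/2<1$, and an $\alpha$-H\"older modulus with $\alpha<1$ cannot produce a contraction on any time interval, however short: for two iterates with $\|\tilde u_1-\tilde u_2\|$ small, the bound $\|\Sigma(\tilde u_1)-\Sigma(\tilde u_2)\|\lesssim\|\tilde u_1-\tilde u_2\|^{\alpha}$ is \emph{worse} than linear near zero, so the induced map on differences is not a strict contraction and Picard iteration need not converge. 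Since the theorem is stated for all $p>2$, this breaks your local existence argument precisely in the range $2<p<4$ (which includes the St-Venant--Kirchhoff case $p=4$ only as the borderline). The paper avoids this by invoking Schauder rather than Banach: assumption $\mathbf{A3}$ is used only as a \emph{sublinear growth} bound to show that a ball $\mathcal{B}_R(T)$ in $\L^p(0,T;\VV^p(\Omega))$ is invariant under the solution map $\mathcal{N}$ for $R$ large and $T$ small, and relative compactness of $\mathcal{N}(\mathcal{B}_R(T))$ in $\L^p(0,T;\LL^p(\Omega))$ follows from the Aubin--Lions--Simon lemma, using the a priori bound on $\dot w$ in $\L^{p'}(0,T;\VV^p(\Omega)')$ from the $p$-Laplace estimate and the compact embedding $\VV^p(\Omega)\hookrightarrow\LL^p(\Omega)$. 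You should replace your contraction step by this compactness argument (at the price of losing uniqueness, which the paper indeed does not claim).
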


The assumption $p\geq d$ is made only for giving a sense in a time continuous space, namely $\mathcal{C}([0,\infty);\L^1(\Omega))$, to the quantity $\det(\I+\nabla u)$, whose the positivity is required for the Korn's inequality aforementioned. The smallness assumption on the data is made in order to take into account this criteria. We claim that if the condition $\det(\I+\nabla u) >0$ is assumed to be automatically satisfied, then regarding the steps of the proof of Theorem~\ref{thfinal}, the time of existence is actually $+\infty$, without smallness assumption on the data. This result corresponds to what we call {\it almost global existence} (we refer to~\cite{Cherrier}, page 154), even if because of this criteria the behavior of the life-span $T$ in function of the bound $\varepsilon$ on the data, namely $T \gtrsim \log (1/\varepsilon)$, is not as good as in the references cited above. The assumptions made on the strain energy in this theorem are actually satisfied by three important families of strain energies, namely the St-Venant--Kirchhoff model, the Fung's model (at least a polynomial approximation of this model), and the Ogden's model in some cases. See section~\ref{secmodels} for more details.

\subsection{Strategy}

The weak solution whose existence is proven in this paper is obtained by a parabolic regularization technique. The parabolic term we add to the elastodynamics system is the $p$-Laplace operator, in order to obtain the regularity of the time-derivative of the displacement in $[\W^{1,p}(\Omega)]^d$. The study of an evolutionary $p$-Laplace system enables us to define a mapping whose a fixed point is a weak solution of the so regularized elastodynamics system. For $T$ small enough, and under assumptions on the differential of the strain energy density function, by the Schauder's theorem we prove that this mapping admits a fixed point, and thus the existence of a local-in-time solution follows for the regularized elastodynamics system. Next, an estimate on the energy of the regularized system is obtained. Assuming that the total strain energy can control the norm of the Green--St-Venant tensor in $[\L^{p/2}(\Omega)]^{d\times d}$, we can thus control the gradient of the displacement in $[\L^p(\Omega)]^d$, thanks to the aforementioned nonlinear Korn's inequality. Furthermore, the energy estimate then shows that the maximal time of existence of the weak solution of the regularized system does not depend on the regularization coefficient, provided that the data are small enough. We can thus allow this parameter to tend to zero, and extract a solution by weak-* convergence. The solution so obtained is {\it a priori} not unique. The uniqueness could perhaps be proven in some particular cases, under some additional regularity property on the displacement. The question of uniqueness remains open in the general case.

The paper is organized as follows. The functional framework and notation are introduced in section~\ref{sec2}. In particular, assumptions are made on the type of strain energies we can consider in this paper, underlined by the study of classical examples. A preliminary result lies in the study of an evolutionary $p$-Laplace system in section~\ref{sec2para}. Section~\ref{sec3} is devoted to the proof of the existence of a local-in-time weak solution for the regularized elastodynamics system by a fixed point method. The question of global existence for system~\eqref{mainsys} is addressed in section~\ref{sec4}.

\section{Preliminaries} \label{sec2}
In the whole paper, we denote by $\Omega$ a bounded domain with Lipschitz boundary of $\R^d$, $d = 2$ or $3$. We denote by $\Gamma_D$ a non-empty relatively open subset of $\p \Omega$, and we define $\Gamma_N := \p \Omega \setminus \Gamma_D$. The first and second time-derivatives of a vector field $v$ will be denoted by $\dot{v}$ and $\ddot{v}$, respectively. For all $q >1$, we denote by $q' = q/(q-1)$ its conjugate number.

\subsection{Functional settings}
Throughout the paper, we will use the H\"{o}lder's inequality. Even if this is well-known, we recall it: Denoting by $\mathcal{S}$ a measure space, for $1\leq p,q \leq \infty$, and $1/r = 1/p + 1/q$, if $f \in \L^p(\mathcal{S})$ and $g \in \L^q(\mathcal{S})$, then
\begin{eqnarray*}
\| fg\|_{\L^r(\mathcal{S})} & \leq & \| f \|_{\L^p(\mathcal{S})} \| g \|_{\L^q(\mathcal{S})}.
\end{eqnarray*}
In particular, when the domain $\Omega$ is bounded, the embeddings $\L^r(\Omega) \hookrightarrow \L^s(\Omega)$ are continuous for $1 \leq s \leq r \leq \infty$. For $q > 1$, we consider multi-dimensional Sobolev spaces, by using the notation
\begin{eqnarray*}
\LL^q(\Omega) = \left[ \L^q(\Omega)\right]^d, & & 
\LLL^q(\Omega) = \left[ \L^q(\Omega)\right]^{d\times d},\\
\WW^q(\Omega) = \left[ \W^q(\Omega)\right]^d, & & 
\WWW^q(\Omega) = \left[ \W^q(\Omega)\right]^{d\times d}.
\end{eqnarray*}
The classical inner product for tensors in $\R^{d\times d}$ is denoted by $A:B = \trace(A^TB)$, and the associated norm is given by $|A|^2 = \trace(A^TA)$. Recall that it satisfies $|AB| \leq  |A| |B|$ for all $A, \ B \in \R^d$. For $p>1$, we define
\begin{eqnarray*}
\WW^{1,p}_{0,D}(\Omega) & := & \left\{ v\in \WW^{1,p}(\Omega), \ v_{|\Gamma_D} = 0 \right\}.
\end{eqnarray*}
As a specific norm for the space $\WW^{1,p}_{0,D}(\Omega)$, we define 
\begin{eqnarray*}
	\| v \|_{\WW^{1,p}_{0,D}(\Omega)} & : = & \| \nabla v \|_{\LLL^{p}(\Omega)},
\end{eqnarray*}
for all $v\in \WW^{1,p}_{0,D}(\Omega)$. Indeed, from the Rellich-Kondrachov theorem, the embedding $\WW^{1,p}(\Omega) \hookrightarrow \LL^p(\Omega)$ is compact, and since the operator $v \mapsto \nabla v$ is injective from $\WW^{1,p}_{0,D}(\Omega)$ to $\LLL^p(\Omega)$, the Petree-Tartar lemma (see \cite{Ern}, Lemma~A.38 page~469) applied to the equality $	\| v \|_{\WW^{1,p}(\Omega)}  =  
\| v \|_{\LL^{p}(\Omega)} + \| \nabla v \|_{\LLL^{p}(\Omega)}$ enables us to endow the space $\WW^{1,p}_{0,D}(\Omega)$ with the norm given above.

Finally, for the boundary $\Gamma_N$ we recall the trace inequality
\begin{eqnarray}
	\| v \|_{\WW^{1-1/p,p}(\Gamma_N)} & \leq & C_{p,N} \| v \|_{\WW^{1,p}(\Omega)}, \label{ineqtrace}
\end{eqnarray}
where the constant $C_{p,N} > 0$ does not depend on $v$. For the sake of brevity, we will use the notation
\begin{eqnarray*}
	\VV^{p}(\Omega) = \WW^{1,p}_{0,D}(\Omega), & & \VV^{p}(\Gamma_N) = \WW^{1-1/p,p}(\Gamma_N).
\end{eqnarray*}

\paragraph{Coerciveness of the Green - St-Venant strain tensor} 
The Green -- St-Venant strain tensor is defined by
\begin{eqnarray}
	E(v) & = & \frac{1}{2} \left( (\I+\nabla v)^T(\I +\nabla v) - \I \right) 
	= \frac{1}{2} \left( \nabla v + \nabla v^T + \nabla v^T \nabla v\right). \label{defE}
\end{eqnarray}
Part~$(b)$ of Theorem~3 of~\cite{Ciarlet2015} is a Korn type inequality for this nonlinear tensor. It provides coerciveness for this tensor in the space $\LLL^p(\Omega)$, with respect to $\VV^{p}(\Omega)$. In particular, given $p>2$, there exists a positive constant $C_K>0$ such that, for all $v \in \VV^{p}(\Omega)$ satisfying $\det(\I+\nabla v) >0$ almost everywhere in $\Omega$, the following inequality holds
\begin{eqnarray}
	\| v \|^2_{\VV^{p}(\Omega)} \leq \| v \|^2_{\WW^{1,p}(\Omega)} & \leq & 
	C_K\| E(v) \|_{\LLL^{p/2}(\Omega)}. \label{ineqKorn}
\end{eqnarray}
On the other hand, for $v_1, \ v_2 \in \WW^{1,p}(\Omega)$, with the Cauchy-Schwarz inequality it is easy to get the estimate
\begin{eqnarray}
	\| E(v)  \|_{\LLL^{p/2}(\Omega)} & \leq & 
	C\left(1 + \| \nabla v\|_{\LLL^p(\Omega)}  \right)
	\| \nabla v \|_{\LLL^p(\Omega)}. \label{ineqLipE} \label{estEnabla}
\end{eqnarray}
Here, as in the rest of the paper, the notation $C$ will define a generic positive constant, independent of $T$, the unknowns and the data of the problem, except $u_0$. But it may depend on $\Omega$, $\Gamma_D$, $\Gamma_N$, $\|u_0\|_{\VV^p(\Omega)}$, $\int_{\Omega}\mathcal{W}(E(u_0))\,\d \Omega$ and $\frac{\p \mathcal{W}}{\p E}(0)$.

\begin{remark} \label{rktraction}
	In the case where $\Gamma_D$ is reduced to $\emptyset$, namely the pure traction case, one can replace the space $\VV^p(\Omega)$ given above by the quotient space $\WW^{1,p}(\Omega) / SE(d)$, where $SE(d)$ denotes the special Euclidean group in $\R^d$, made of rigid displacements. The space $\VV^p(\Gamma_N)$ would be replaced by $\WW^{1-1/p,p}(\Gamma_N) / SE(d)$. From Part~$(b)$ of Theorem~1 of~\cite{Ciarlet2015}, we have the following nonlinear Korn's inequality:
	\begin{eqnarray*}
	\| v \|_{\WW^{1,p}(\Omega) / SE(d)}^2 = 
	\inf_{\mathbf{R}\in \mathbb{O}^+(d)} \| \mathbf{R}\nabla v\|_{\LLL^p(\Omega)}
	& \leq & C_K\| E(v) \|_{\LLL^{p/2}(\Omega)}.
	\end{eqnarray*}
	We have denoted by $\mathbb{O}^+(d)$ the rotation group in dimension $d$. Then the whole framework in the rest of the article would consist in looking for solutions to system~\eqref{mainsys} modulo rigid displacements.
\end{remark}

\subsection{Assumptions on the strain energy density function}
Let $p > 2$. The strain energy
\begin{eqnarray*}
\mathcal{W}: \ \LLL^{p/2}(\Omega) & \rightarrow & \L^1(\Omega)
\end{eqnarray*}
is a positive function of the Green -- St-Venant strain tensor $E$. When this mapping is G\^{a}teaux-differentiable, the derivative of $\mathcal{W}\circ E$ with respect to the displacement $u$ in the direction $v$ can be expressed as
\begin{eqnarray*}
\frac{\p (\mathcal{W}\circ E)}{\p u}.v & = & \frac{\p \mathcal{W}}{\p E}(E(u)) : (E'(u).v)
\end{eqnarray*}
with $E'(u).v = \frac{1}{2}((\I+\nabla u)^T\nabla v + \nabla v^T(\I+\nabla u)^T)$. If furthermore the mapping $\frac{\p \mathcal{W}}{\p E}(E(u))$ defines a symmetric tensor, then this expression reduces to
\begin{eqnarray*}
\frac{\p (\mathcal{W}\circ E)}{\p u}.v & = & (\I + \nabla u) \frac{\p \mathcal{W}}{\p E}(E(u)) : \nabla v.
\end{eqnarray*}
We assume the following set of hypotheses on the strain energy:
\begin{description}
	\item[A1]
	There exists $C > 0$ such that for all $E \in \LLL^{p/2}(\Omega)$ we have
	\begin{eqnarray}
	 C\|E\|^{p/2}_{\LLL^{p/2}(\Omega)} & \leq & \int_{\Omega}\mathcal{W}(E)\,\d\Omega. \label{ineqA1}
	\end{eqnarray}
	
	\item[A2] The strain energy $\mathcal{W}$ is of class $\mathcal{C}^1$ on $\LLL^{p/2}(\Omega)$. We denote
	\begin{eqnarray*}
		\check{\Sigma}(E) = \frac{\p \mathcal{W}}{\p E}(E) & \in & 
		\mathcal{L}\left(\LLL^{p/2}(\Omega); \L^{1}(\Omega)  \right) \simeq
		 \LLL^{(p/2)'}(\Omega) =  \LLL^{p/(p-2)}(\Omega). 
	\end{eqnarray*}
	 We assume that, for each symmetric tensor $E$, the tensor $\check{\Sigma}(E)$ is symmetric. When the tensor $E$ is expressed as function of a vector field $v$, through the expressions~\eqref{defE}, we will denote $\Sigma(v) := \check{\Sigma}(E(v))$.
	
	\item[A3]
	The mapping $\mathcal{W}$ is of class $\mathcal{C}^1$ on $\L^{p/2}(0,T;\LLL^{p/2}(\Omega))$. Moreover, the mapping $\Sigma = \frac{\p \mathcal{W}}{\p E}\circ E$ is locally $\alpha$-sublinear with $ \displaystyle \alpha  = \min(1,(p-2)/2)$. More precisely, for $T>0$ and $R(T)>0$, there exists a positive constant $C_{R(T)} >0$ such that
	\begin{eqnarray}
	\| v \|_{\L^p(0,T;\VV^p(\Omega))} \leq R(T) 
	 & \Rightarrow & \| \Sigma(v)  \|_{\L^{(p/2)'}(0,T;\LLL^{(p/2)'}(\Omega))} \leq C + C_{R(T)}
	\| v \|^{\alpha}_{\L^p(0,T;\VV^p(\Omega))}. \label{ineqA3}
	\end{eqnarray}
	Here $R$ and $C_R$ are assumed to be non-decreasing with respect to $T$ and $R$, respectively.
	\end{description}
	
	\noindent Inequalities~\eqref{ineqKorn} and~\eqref{estEnabla} imply that
	\begin{eqnarray*}
	\| v \|^2_{\L^p(0,T;\VV^p(\Omega))} & \leq & C_K \| E(v) \|_{\L^{p/2}(0,T;\LLL^{p/2}(\Omega))}, \\
	\|E(v)  \|_{\L^{p/2}(0,T;\VV^{p/2}(\Omega))} & \leq & C\left(T^{1/p} + \| v \|_{\L^p(0,T;\VV^p(\Omega))}  \right)
	\| v \|_{\L^p(0,T;\VV^p(\Omega))},
	\end{eqnarray*}
	for $v \in \L^p(0,T;\VV^p(\Omega))$. Therefore, for $T \leq 1$ for instance (this assumption will be used only for proving the local-in-time result), assumption~{\bf A3} is implied by the following one. 
	
	\begin{description}
	\item[A3']
	The mapping $\mathcal{W}$ is of class $\mathcal{C}^1$ on $\LLL^{p/2}(\Omega)$. Moreover, its derivative $\check{\Sigma}$ is locally $\alpha$-sublinear on $\VV^p(\Omega)$, with $ \displaystyle \alpha  = \min(1,(p-2)/2)$. Namely, for $R>0$, there exists a positive constant $\check{C}_R >0$, non-decreasing with respect to $R$ such that
	\begin{eqnarray}
	\| E \|_{\LLL^{p/2}(\Omega)} \leq R 
	 & \Rightarrow & \| \check{\Sigma}(E)  \|_{\LLL^{(p/2)'}(\Omega)} \leq C + \check{C}_R
	\| E\|^{\alpha}_{\LLL^{p/2}(\Omega)}.
	\label{ineqA3prime}
	\end{eqnarray}
\end{description}

\noindent Finally, we sum up the hypotheses we make on the other data:
	\begin{description}
	\item[A4] We assume that $u_0 \in \mathbf{V}^p(\Omega)$ satisfies $\det(\I+\nabla u_0) >0$ almost everywhere in $\Omega$, and that
	\begin{eqnarray*}
		\int_{\Omega} \mathcal{W}(E(u_0))\,\d \Omega < \infty, 
		\quad u_1 \in \LL^2(\Omega),
		\quad f \in \L^{2}_{loc}(0,\infty;\LL^2(\Omega)), 
		\quad  g \in \L^{2}_{loc}(0,\infty;\VV^{2}(\Gamma_N)').
	\end{eqnarray*}
\end{description}

Assumptions~$\mathbf{A1}$ and~$\mathbf{A2}$ are used in an essential manner in section~\ref{sec4} for energy estimates, as well as assumption~$\mathbf{A4}$. Assumption~$\mathbf{A3}$ is mainly used in section~\ref{sec3}, in the proof of the fixed point method.

\begin{remark}
The regularity on $\Sigma$ postulated in assumption $\mathbf{A3}$ is made in particular in order to have $(\I+\nabla v)\Sigma(v)$ in $\L^{p'}(0,T;\LLL^{p'}(\Omega))$ for $v \in \VV^{p}(\Omega)$ (see Lemma~\ref{welldef}). 
The nonlinear Korn's inequality given in~\cite{Mardare2004} in the case $p=2$ would enable us only to consider $\Sigma(v)$ in the space $\LLL^{\infty}(\Omega)$, which is not appropriate in view of the standard examples of strain energies, and leads to difficulties due to lack of reflexivity.
\end{remark}

\begin{remark}  \label{rk2A}
In the assumptions~$\mathbf{A3}$ and~$\mathbf{A3'}$, we distinguish two cases. For the sake of simplicity, let us focus our comments on assumption~$\mathbf{A3'}$. First, when $ p \leq 4$, that is to say $\alpha = (p-2)/2$, inequality~\eqref{ineqA3prime} of assumption~{\bf A3'} is implied by
\begin{eqnarray*}
\| \check{\Sigma}(E) - \check{\Sigma}(0) \|^{p/(p-2)}_{\LLL^{p/(p-2)}(\Omega)} 
& \leq & \check{C}_{R}
\| E(v) \|^{p/2}_{\LLL^{p/2}(\Omega)},
\end{eqnarray*}
and even more so it is satisfied when $\check{\Sigma}$ is assumed to be locally $(p-2)/2$-H\"{o}lderian. Here we use that $(p/2)/(p/2)' = \alpha$. Secondly, when $p\geq 4$, and when the mapping $\check{\Sigma}$ is of class $\mathcal{C}^1$, with 
	\begin{eqnarray*}
	\frac{\p \check{\Sigma}}{\p E}(E) & \in & 
	\mathcal{L}\left( \LLL^{p/2}(\Omega) ; \LLL^{(p/2)'}(\Omega) \right) 
	\simeq \left[\LLL^{p/(p-4)}(\Omega)\right]^{d\times d},
	\end{eqnarray*}
	from the mean value theorem the mapping $\check{\Sigma}$ is locally Lipschitz, and thus this assumption is automatically satisfied:
	\begin{eqnarray*}
		\| \check{\Sigma}(E)  \|_{\LLL^{(p/2)'}(\Omega)} & \leq & \| \check{\Sigma}(0)\|_{\LLL^{p/2)'}(\Omega)} +
		\sup_{\|E\|_{\LLL^{p/2}(\Omega)} \leq R} \left(\left\| \frac{\p \check{\Sigma}}{\p E}(E) \right\|_{\left[\LLL^{p/(p-4)}(\Omega)\right]^{d\times d}} \right)
		\| E\|_{\LLL^{p/2}(\Omega)}.
	\end{eqnarray*}
\end{remark}

\subsection{Examples of strain energy density functions} \label{secmodels}
Let us mention some models of strain energy, and see if the assumptions $\mathbf{A1}$ -- $\mathbf{A3}$ are satisfied for these examples. We refer to~\cite{Ciarlet} (section 4.10, page 183) or~\cite{Fung} for more comments on the models addressed below. Implicitly, in the expressions below we assume that the tensor $E$ is symmetric.

\paragraph{The St-Venant -- Kirchhoff model.}
It corresponds to the following strain energy
\begin{eqnarray*}
	\mathcal{W}_1(E) & = & \mu_L \trace\left( E^2 \right) + \frac{\lambda_L}{2} \trace(E)^2,
\end{eqnarray*}
where $\mu_L >0$ and $\lambda_L \geq 0$ are the so-called Lam\'e coefficients. Here, the exponent $p = 4$ is well-fitted, because in this case $p/2 = p/(p-2) = 2$, and we can estimate easily
\begin{eqnarray*}
\int_{\Omega} \mathcal{W}_1(E)\, \d \Omega & \geq & C \| E \|^2_{\LLL^{2}(\Omega)}, \\
\check{\Sigma}_1(E) := \frac{\p \mathcal{W}_1}{\p E}(E) & = & 2\mu_L E + \lambda_L \trace(E)\I, \\
\|\check{\Sigma}_1(E) \|_{\LLL^2(\Omega)} & \leq & C\|E \|_{\LLL^2(\Omega)}.
\end{eqnarray*}
Thus the assumptions $\mathbf{A1}$ -- $\mathbf{A3}$ (and even $\mathbf{A3'}$) are verified for this example. 

\paragraph{The Fung's model.}
It corresponds to the following strain energy
\begin{eqnarray*}
	\mathcal{W}_2(E) & = & \mathcal{W}_2(0) + \beta \left(\exp\left(\gamma \ \trace(E^2)\right) - 1\right),
\end{eqnarray*}
where $\mathcal{W}_2(0) \geq 0$, $\beta >0$ and $\gamma > 0$ are given coefficients. We only know that the space $\W^{s,q}(\Omega)$ is invariant under composition of the exponential function if $s \geq 1$, for certain values of $q$ (see~\cite{BB1974}, Lemma~A.2. page 359). Therefore, in our context where $E$ is considered only in $\LLL^{p/2}(\Omega)$, we need to simplify this model. We approximate this energy by the following one
\begin{eqnarray*}
	\mathcal{W}^N_2(E) & = &  \mathcal{W}_2(0) + \beta \sum_{k = 1}^N \frac{\gamma^k \trace(E^2)^k}{k!},
\end{eqnarray*}
where $2 \leq N \in \N$ is the degree of approximation of the power series defining the exponential function. Choosing $p =4N$, we have $(p-2)/2 \geq 1$, $p/(p-4) = N/(N-1)$ and the following estimate holds:
\begin{eqnarray*}
	\int_{\Omega} \mathcal{W}^N_2(E)\,\d \Omega & \geq & 
	C\| E\|^{2N}_{\LLL^{2N}(\Omega)}.
\end{eqnarray*}
From the identities
\begin{eqnarray*}
	\check{\Sigma}^N_2(E) := \frac{\p \mathcal{W}^N_2}{\p E}(E) & = &
	2\beta \gamma\left(\sum_{k=0}^{N-1} \frac{\gamma^{k} \trace(E^2)^k}{k!}\right)E, \\
	\frac{\p \check{\Sigma}^N_2}{\p E}(E) &  = & 2\beta \gamma\left(\sum_{k=0}^{N-1} \frac{\gamma^{k} \trace(E^2)^k}{k!}\right)\I + 4\beta\gamma^2 \left(\sum_{k=0}^{N-2} \frac{\gamma^{k} \trace(E^2)^k}{k!}\right)E\otimes E \quad 
	\text{in } \R^{d\times d \times d\times d}
\end{eqnarray*}
(where $(E\otimes E)F = (E:F)E$ as a notation, and $\I$ denoting also the identity mapping of $\R^{d^4}$), and from Remark~\ref{rk2A}, for $R>0$ large enough we can estimate
\begin{eqnarray*}
\left\| \frac{\p \check{\Sigma}^N_2}{\p E} \right\|_{\L^{N/(N-1)}\left(0,T;\left[\LLL^{N/(N-1)}(\Omega)\right]^{d\times d}\right)} & \leq &
C \sum_{k=0}^{N-1} \|E\|^{2k}_{\L^{2N}(0,T;\LLL^{2N}(\Omega))} 
\leq C \sum_{k=0}^{N-1} R^{2k} \leq CR^{2N-2}, \\
		\| \check{\Sigma}_2^N(E)  \|_{\LLL^{2N/(2N-1)}(\Omega)} & \leq & 
		\| \check{\Sigma}_2^N(0)  \|_{\LLL^{2N/(2N-1)}(\Omega)} + 
		CR^{2N-2}
		\| E \|_{\LLL^{2N}(\Omega)}.
\end{eqnarray*}
Assumptions $\mathbf{A1}$ -- $\mathbf{A3'}$ are thus verified for this approximation of the Fung's model.

\paragraph{The Ogden's model.}
The family of strain energies corresponding to this model are linear combinations of energies of the following form
\begin{eqnarray*}
	\mathcal{W}_3(E) & = &  \trace\left((2E+\I)^{\gamma} - \I \right),
\end{eqnarray*}
where $\gamma \in \R$. Since the tensor $2E+\I$ is real and symmetric, the expression $(2E+\I)^{\beta} $ makes sense for all number $\beta \in \R$, and the energy $\mathcal{W}_3(E)$ can be expressed in terms of the eigenvalues of $2E+\I$. This general form of the strain energy includes the cases of the Neo-Hookean and Mooney-Rivlin models ($\gamma = 1$ and $\gamma \in \{-1, +1\}$ respectively). But here we only evoke the case $\gamma > 1$. First, since $2E(u)+\I = (\I+\nabla u)^T(\I+\nabla u)$, if $(\lambda_i)_{1\leq i \leq d}$ denote the singular values of $\I+\nabla u$, and $(\mu_i)_{1\leq i \leq d}$ denote those of $E(u)$, we have
\begin{eqnarray*}
\trace\left((2E+\I)^{\gamma} -\I \right)  =  \sum_{i=1}^d \left(\lambda_i^{2\gamma}  -1\right)
= \sum_{i=1}^d \left((1+2\mu_i)^\gamma -1\right)
 \geq  \sum_{i=1}^d (2\mu_i)^\gamma 
& \geq & C\left(\sum_{i=1}^d \mu_i^2\right)^{\gamma/2}
= C\left| E \right|^{\gamma},
\end{eqnarray*}
because of the equivalence of norms in $\R^d$. Thus, by choosing $p = 2\gamma >2$, we have
\begin{eqnarray*}
\int_{\Omega} \mathcal{W}_3(E)\, \d \Omega  =
\left\| 2E+\I \right\|_{\LLL^{p/2}(\Omega)}^{p/2} - d|\Omega| & \geq & C\|E\|_{\LLL^{p/2}(\Omega)}^{p/2} , 
\end{eqnarray*}
that is to say~$\mathbf{A1}$ holds, and~$\mathbf{A2}$ can be easily checked. For the assumptions~$\mathbf{A3}$ and~$\mathbf{A3'}$, since the derivative of $\mathcal{W}_3$ is given by $\check{\Sigma}_3(E) = 2\gamma (2E+\I)^{\gamma-1}$, the case $\gamma \geq 2$ can be treated as previously. Turning to $1 < \gamma < 2$, we have that $(p-2)/2 = \gamma-1 \in (0,1)$. Since the derivative of $\mathcal{W}_3$ writes $\check{\Sigma}_3(E) = 2\gamma (2E+\I)^{\gamma-1}$, and since the function $x \mapsto x^{\gamma-1}$ is $(\gamma-1)$-H\"{o}lderian on $[0,1]$, we deduce that for $E$ small enough in $\L^{\gamma}(0,T;\LLL^{\gamma}(\Omega))$, namely $R(T)$ small enough, the following estimate holds
\begin{eqnarray*}
\|\check{\Sigma}_3(E)\|
_{\L^{\gamma/(\gamma-1)}(0,T;\LLL^{\gamma/(\gamma-1)}(\Omega))} 
& \leq & 
T^{1-1/\gamma}\|\check{\Sigma}_3(0)\|
_{\L^{\gamma/(\gamma-1)}(0,T;\LLL^{\gamma/(\gamma-1)}(\Omega))} +
C_{R(T)} \| E \|_{\L^{\gamma}(0,T;\LLL^\gamma(\Omega))}^{\gamma-1}.
\end{eqnarray*}
We will see in section~\ref{sec3} (more precisely Lemma~\ref{lemmadefint}) that the constant $R(T)$ of assumption~$\mathbf{A3}$ can be chosen small enough, provided that $T$ is chosen small enough. Thus, for the Ogden's model with a coefficient $\gamma >1$, assumptions $\mathbf{A1}$--$\mathbf{A3}$ are satisfied.

\section{A nonlinear parabolic system} \label{sec2para}
In this section, we are interested in the following nonlinear parabolic system
\begin{eqnarray} \label{parasys}
\left\{\begin{array} {rcl}
\displaystyle \rho \dot{w} - \kappa \divg (|\nabla w |^{p-2} \nabla w) = f & &  \text{in } \Omega \times (0,T), \\
w = 0 & &  \text{on } \Gamma_D\times (0,T), \\
\displaystyle \kappa|\nabla w |^{p-2} \nabla w\, n = g & &  \text{on } \Gamma_N \times (0,T),\\
\displaystyle w(\cdot,0) = u_1 & & \text{in } \Omega,
\end{array} \right.
\end{eqnarray}
where $\kappa > 0$, $p\geq 2$, and $T>0$ is fixed and arbitrary. System~\eqref{parasys} is an evolutionary $p$-Laplace equation with mixed boundary conditions. Throughout it is assumed that
\begin{eqnarray*}
u_1 \in \LL^2(\Omega), \quad f \in \L^{p'}(0,T; \VV^p(\Omega)'), \quad g \in \L^{p'}(0,T;\VV^p(\Gamma_N)').
\end{eqnarray*}

\begin{definition} \label{def1}
	We say that $w$ is a weak solution of system~\eqref{parasys} if $w\in \L^{p}(0,T; \VV^p(\Omega))$, $\dot{w} \in \L^{p'}(0,T; \VV^p(\Omega)')$, $w(0) = u_1$, and for all $\varphi \in \L^p(0,T;\VV^p(\Omega))$ we have
	\begin{eqnarray*}
		\rho \langle \dot{w} ; \varphi \rangle_{\VV^p(\Omega)';\VV^p(\Omega)} + 
		\kappa \int_{\Omega}|\nabla w|^{p-2} \nabla w : \nabla \varphi\, \d \Omega & = &
		\langle f ; \varphi \rangle_{\VV^p(\Omega)';\VV^p(\Omega)} +
		\langle g ; \varphi \rangle_{\VV^p(\Gamma_N)';\VV^p(\Gamma_N)},
	\end{eqnarray*}
	almost everywhere in $(0,T)$.
\end{definition}

\begin{remark}
	By definition, a weak solution of system~\eqref{parasys} lies in the space $W(0,T;\WW^{1,p}(\Omega))$ defined by 
	\begin{eqnarray*}
		w \in W(0,T;\V^{p}(\Omega)) & \Leftrightarrow & 
		\left\{ \begin{array} {l}
			w \in \L^p(0,T;\VV^{p}(\Omega)) \\
			\displaystyle \dot{w} \in \L^{p'}(0,T;\VV^{p}(\Omega)')
		\end{array}\right. ,
	\end{eqnarray*}
	corresponding to the Gelfand triplet $V \hookrightarrow H \equiv H' \hookrightarrow V'$, with $H = \LL^2(\Omega)$, $V = \VV^{p}(\Omega)$, and dense embeddings. So it is well-known that such a solution lies also in $C([0,T];\LL^2(\Omega))$, and thus the space $\LL^2(\Omega)$ in which the initial condition is considered makes sense.
\end{remark}

\begin{proposition} \label{propLaplace}
	System~\eqref{parasys} admits a unique weak solution $w$, in the sense of definition~\ref{def1}. Moreover, it satisfies the estimate
	\begin{eqnarray}
	\left\| \dot{w} \right\|^{p'}_{\L^{p'}(0,T;\VV^{p}(\Omega)')} + 
	\| w\|^p_{\L^p(0,T;\VV^p(\Omega))} 
	& \leq & 
	C \left(\|u_1\|^2_{\LL^2(\Omega)} \right. \nonumber \\ 
	& & + \left.\| f \|^{p'}_{\L^{p'}(0,T;\VV^{p}(\Omega)')}
	+ \| g \|^{p'}_{\L^{p'}(0,T;\VV^{p}(\Gamma_N)')}
	\right),
	\end{eqnarray}
	where the constant $C$ depends only on $\rho$, $\kappa$ and $\Omega$. 
\end{proposition}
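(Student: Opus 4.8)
The plan is to recast~\eqref{parasys} as an abstract Cauchy problem for a monotone operator on the Gelfand triple $\VV^p(\Omega) \hookrightarrow \LL^2(\Omega) \equiv \LL^2(\Omega)' \hookrightarrow \VV^p(\Omega)'$, and then to invoke the classical existence theory for evolution equations governed by bounded, coercive, hemicontinuous monotone operators (Browder--Minty; see e.g. Lions or Showalter). Writing $V = \VV^p(\Omega)$ and $H = \LL^2(\Omega)$, I would define $A \colon V \to V'$ by $\langle A(w),\varphi\rangle_{V';V} = \kappa\int_\Omega |\nabla w|^{p-2}\nabla w : \nabla\varphi\,\d\Omega$, and collect the two source terms into a single $F \in \L^{p'}(0,T;V')$ through $\langle F,\varphi\rangle = \langle f,\varphi\rangle_{V';V} + \langle g,\varphi\rangle_{\VV^p(\Gamma_N)';\VV^p(\Gamma_N)}$. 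The trace inequality~\eqref{ineqtrace} guarantees that $g$ acts continuously on $V$ and that $\|F\|_{V'} \le \|f\|_{V'} + C_{p,N}\|g\|_{\VV^p(\Gamma_N)'}$, so indeed $F \in \L^{p'}(0,T;V')$. The weak formulation of Definition~\ref{def1} then reads $\rho\dot w + A(w) = F$ in $V'$, with $w(0) = u_1 \in H$.

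Next I would verify the structural hypotheses on $A$. Using the specific norm $\|v\|_V = \|\nabla v\|_{\LLL^p(\Omega)}$, boundedness is immediate from H\"older: $\|A(w)\|_{V'} \le \kappa\|w\|_V^{p-1}$. Coercivity is exact, since $\langle A(w),w\rangle = \kappa\|\nabla w\|_{\LLL^p(\Omega)}^p = \kappa\|w\|_V^p$. Monotonicity (in fact strong monotonicity, as $p\ge 2$) follows from the elementary vector inequality $(|a|^{p-2}a - |b|^{p-2}b):(a-b) \ge c_p|a-b|^p$, valid for all $a,b \in \R^{d\times d}$, applied pointwise with $a = \nabla w_1$, $b = \nabla w_2$ and integrated. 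Hemicontinuity of $s \mapsto \langle A(w+sv),\varphi\rangle$ is a consequence of the continuity of $z \mapsto |z|^{p-2}z$ together with dominated convergence.

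With these properties, existence of $w \in \L^p(0,T;V)$ with $\dot w \in \L^{p'}(0,T;V')$ and $w(0) = u_1$ follows from the standard Galerkin scheme: one solves finite-dimensional approximations on a basis of $V$, derives uniform a~priori bounds from the energy identity below, and passes to the limit. I expect the only genuinely delicate step to be the identification of the weak limit of the nonlinear term $|\nabla w_m|^{p-2}\nabla w_m$, for which weak convergence of the gradients is insufficient; this is resolved by Minty's monotonicity trick, using the monotonicity and hemicontinuity of $A$. Uniqueness is then immediate: if $w_1,w_2$ are two solutions, subtracting the equations and testing with $w_1 - w_2$, the term $\langle \dot w_1 - \dot w_2, w_1-w_2\rangle = \tfrac12\tfrac{\d}{\d t}\|w_1-w_2\|_H^2$ combines with strong monotonicity to force $w_1 = w_2$.

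Finally, for the quantitative estimate I would test the equation with $\varphi = w$, which is licit because $w \in W(0,T;\VV^p(\Omega))$ and hence the integration-by-parts formula $\langle\dot w,w\rangle = \tfrac12\tfrac{\d}{\d t}\|w\|_H^2$ holds. This yields $\tfrac{\rho}{2}\tfrac{\d}{\d t}\|w\|_{\LL^2(\Omega)}^2 + \kappa\|w\|_V^p = \langle F,w\rangle$; integrating over $(0,T)$ and using Young's inequality $\langle F,w\rangle \le \tfrac{\kappa}{2}\|w\|_V^p + C\|F\|_{V'}^{p'}$ to absorb the nonlinear term on the left gives the bound on $\|w\|_{\L^p(0,T;V)}^p$, with $\|u_1\|_{\LL^2(\Omega)}^2$ controlling the contribution of the initial datum. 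The bound on $\dot w$ then comes directly from the equation: $\rho\|\dot w\|_{V'} \le \|F\|_{V'} + \kappa\|w\|_V^{p-1}$, so raising to the power $p'$ and using the crucial identity $(p-1)p' = p$ gives $\|\dot w\|_{V'}^{p'} \le C(\|F\|_{V'}^{p'} + \|w\|_V^p)$, which after integration in time reduces to the already-established bound. Recombining $F$ into $f$ and $g$ produces the stated estimate, the constant $C$ depending only on $\rho$, $\kappa$ and $\Omega$ (the latter through $C_{p,N}$).
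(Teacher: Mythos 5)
Your proposal is correct and follows essentially the same route as the paper: the paper simply cites the classical monotone-operator existence theory (Bardos--Brezis, i.e.\ exactly the Browder--Minty/Galerkin framework you sketch, with uniqueness from monotonicity/convexity of $v\mapsto|v|^p$), and then derives the a~priori bounds exactly as you do --- testing with $w$, absorbing via Young's inequality and the trace inequality, and recovering the $\dot w$ bound from the equation through the identity $(p-1)p'=p$.
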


\begin{proof}
	System~\eqref{parasys} is a $p$-Laplace evolution problem. Existence and uniqueness of a weak solution for this system have been proven in~\cite{BardosBrezis} for instance (see Theorem~V.3. p.~387)\footnote{Note that in this reference the framework would also enable us to consider a non-constant density $\rho$.}. Uniqueness is due to the convexity of the function $v\mapsto |v|^p$. Let us prove the announced estimate, which will be obtained with standard arguments. Taking the inner product of the first equation of~\eqref{parasys} by $w$ and integrating on $\Omega$ yields, with the Green formula
	\begin{eqnarray*}
		\frac{\rho}{2}\frac{\d }{\d t}\left( \| w \|^2_{\LL^2(\Omega)} \right) 
		+ \kappa \| \nabla w \|^p_{\LL^p(\Omega)} & = &
		\langle f ; w \rangle_{\VV^{p}(\Omega)' ; \VV^{p}(\Omega)} +
		\langle g ; w \rangle_{\VV^{p}(\Gamma_N)' ; \VV^{p}(\Gamma_N)}, \\
		& \leq & 
		\| f \|_{\VV^{p}(\Omega)'} \| w\|_{\VV^{p}(\Omega)} +
			\| g \|_{\VV^{p}(\Gamma_N)'} \| w \|_{\VV^{p}(\Gamma_N)}.
	\end{eqnarray*}
	Keep in mind the identity $\| \nabla w \|_{\LL^p(\Omega)} = \|  w \|_{\WW^{1,p}(\Omega)}$, and recall the trace inequality $\| w \|_{\VV^{p}(\Gamma_N)} \leq C_{p,N} \| w \|_{\VV^{p}(\Omega)}$, where $C_{p,N} > 0$ depends only on $\Gamma_N$, $\Omega$ and $p$. Next, integrating this inequality in time between $0$ and $T$ gives, with the Young's inequality involving some $\alpha > 0$,
	\begin{eqnarray*}
		\frac{\rho}{2} \| w(T) \|^2_{\LL^2(\Omega)} + 
		\kappa \int_0^T\| \nabla w(t) \|^p_{\LL^p(\Omega)} \d t
		& \leq & \frac{\rho}{2} \| u_1 \|^2_{\LL^2(\Omega)} + \\
		 & &  \frac{\alpha^{-p'}}{p'}\int_0^T \| f(t) \|^{p'}_{\VV^{p}(\Omega)'}\d t +
		\frac{\alpha^p}{p}\int_0^T \| w(t)\|^p_{\VV^{p}(\Omega)} \d t + \\
		& &  \frac{\alpha^{-p'}}{p'}\int_0^T \| g(t) \|^{p'}_{\VV^{p}(\Gamma_N)'}\d t
		+ \frac{(\alpha C_{p,N})^p}{p} \int_0^T \| w(t)\|^p_{\VV^{p}(\Omega)} \d t.  
	\end{eqnarray*}
	Choose $\alpha > 0$ small enough, such that $(\alpha ^p + (\alpha C_{p,N})^p)/p \leq \kappa/2$, and we obtain
	\begin{eqnarray*}
	\frac{\kappa}{2} \|  w \|^p_{\L^p(0,T;\VV^{p}(\Omega))}
	& \leq & \frac{\rho}{2} \| w(0) \|^2_{\LL^2(\Omega)} + C \left( 
	| f \|^{p'}_{\L^{p'}(0,T;\VV^{p}(\Omega)')}
	+ \| g \|^{p'}_{\L^{p'}(0,T;\VV^{p}(\Gamma_N)')}
	\right).
	\end{eqnarray*}
	For the estimate on the time-derivative, due to the equality $\rho \dot{w}  =  f + \kappa \divg\left(|\nabla w |^{p-2}\nabla w \right)$ it is sufficient to control the term $\divg\left(|\nabla w |^{p-2}\nabla w \right)$ in the space $\L^{p'}(0,T;\VV^{p}(\Omega)')$. First, it is easy to verify that $|\nabla w |^{p-2} \nabla w$ lies in $\L^{p'}(0,T;\LL^{p'}(\Omega))$. More specifically, we have 
	\begin{eqnarray*}
	\| |\nabla w |^{p-2} \nabla w \|^{p'}_{\L^{p'}(0,T;\LL^{p'}(\Omega))} & = & 
	\| \nabla w \|^p_{\L^{p}(0,T;\LL^{p}(\Omega))}, \\
	\| |\nabla w |^{p-2} \nabla w \|_{\L^{p'}(0,T;\LL^{p'}(\Omega))} & = & 
	\|  w \|^{p-1}_{\L^{p}(0,T;\VV^{p}(\Omega))}.
	\end{eqnarray*}
	Therefore, for all $\varphi \in \VV^{p}(\Omega)$, from H\"older's inequality we have
	\begin{eqnarray*}
	 \left\langle \divg\left(|\nabla w |^{p-2}\nabla w \right) ; \varphi \right\rangle_{\VV^{p}(\Omega)'; \VV^{p}(\Omega)}
	& = &  
	- \int_{\Omega} |\nabla w |^{p-2}\nabla w : \nabla \varphi\, \d \Omega 
	+  \langle g ; \varphi \rangle_{\VV^{p}(\Gamma_N)'; \VV^{p}(\Gamma_N)}, \\
	\left| \left\langle \divg\left(|\nabla w |^{p-2}\nabla w \right) ; \varphi \right\rangle_{\VV^{p}(\Omega)'; \VV^{p}(\Omega)} \right| & \leq &
	\| |\nabla w |^{p-2} \nabla w \|_{\LL^{p'}(\Omega))} \| \nabla \varphi \|_{\LL^{p}(\Omega))}
	+ C_{p,N}\| g\|_{\VV^{p}(\Gamma_N)'}\|\varphi \|_{\VV^{p}(\Omega)} \\
	& \leq & C \left( \|  w \|^{p-1}_{\VV^{p}(\Omega)} + 
	\| g\|_{\VV^{p}(\Gamma_N)'}
	\right)\|\varphi \|_{\VV^{p}(\Omega)},
	\end{eqnarray*}
	and thus, by Young's inequality
	\begin{eqnarray*}
	\left\| \divg\left(|\nabla w |^{p-2}\nabla w \right) \right\|^{p'}_{\VV^{p}(\Omega)'} 
	& \leq & C \left( \|  w \|^{p-1}_{\VV^{p}(\Omega)} + 
	\| g\|_{\VV^{p}(\Gamma_N)'}
	\right)^{p'} \\
	& \leq & C \left( \|  w \|^{p}_{\VV^{p}(\Omega)} + 
	\| g\|^{p'}_{\VV^{p}(\Gamma_N)'}
	\right),
	\\
	\left\| \divg\left(|\nabla w |^{p-2}\nabla w \right) \right\|^{p'}_{\L^{p'}(0,T;\VV^{p}(\Omega)')} 
	& \leq & C \left( \|  w \|^{p}_{\L^{p}(0,T;\VV^{p}(\Omega))} + 
	\| g\|^{p'}_{\L^{p'}(0,T;\VV^{p}(\Gamma_N)')}
	\right),
	\end{eqnarray*}
	which concludes the proof.
\end{proof}

We now consider system~\eqref{parasys} with particular additional right-hand-sides, namely
\begin{eqnarray} \label{parasys2}
\left\{\begin{array} {rcl}
\displaystyle \rho \dot{w} - \kappa \divg (|\nabla w |^{p-2} \nabla w) = f + \divg A & &  \text{in } \Omega \times (0,T), \\
w = 0 & &  \text{on } \Gamma_D\times (0,T), \\
\displaystyle \kappa|\nabla w |^{p-2} \nabla w\, n = g - An& &  \text{on } \Gamma_N \times (0,T),\\
\displaystyle w(\cdot,0) = u_1 & & \text{in } \Omega,
\end{array} \right.
\end{eqnarray} 
where $A$ is a given tensor field.

\begin{definition} \label{def2}
	Let $A \in \L^{p'}(0,T; \LLL^{p'}(\Omega))$. We say that $w$ is a weak solution of system~\eqref{parasys2} if $w\in \L^{p}(0,T; \VV^p(\Omega))$, $\dot{w} \in \L^{p'}(0,T; \VV^p(\Omega)')$, $w(0) = u_1$, and for all $\varphi \in \L^p(0,T;\VV^p(\Omega))$ we have
	\begin{eqnarray*}
	\rho \langle \dot{w} ; \varphi \rangle_{\VV^p(\Omega)';\VV^p(\Omega)} + 
	\kappa \int_{\Omega}|\nabla w|^{p-2} \nabla w : \nabla \varphi\, \d \Omega & = &
	\langle f ; \varphi \rangle_{\VV^p(\Omega)';\VV^p(\Omega)} +
	\langle g ; \varphi \rangle_{\VV^p(\Gamma_N)';\VV^p(\Gamma_N)}\\
	& & - \int_{\Omega} A: \nabla \varphi\, \d \Omega,
	\end{eqnarray*}
	almost everywhere in $(0,T)$.
\end{definition}

From Proposition~\ref{propLaplace} we can deduce the following result.

\begin{corollary} \label{coroLaplace}
	Let $A \in \L^{p'}(0,T; \LLL^{p'}(\Omega))$. Then system~\eqref{parasys2} admits a unique weak solution $w$, in the sense of Definition~\ref{def2}. Moreover, it satisfies the following estimate
	\begin{eqnarray}
	\left\| \dot{w} \right\|^{p'}_{\L^{p'}(0,T;\VV^{p}(\Omega)')} + 
	\| w\|^p_{\L^p(0,T;\VV^p(\Omega))} 
	& \leq & 
	C \left(\|u_1\|^2_{\LL^2(\Omega)}  \nonumber  + \| f \|^{p'}_{\L^{p'}(0,T;\VV^{p}(\Omega)')}
	+ \| g \|^{p'}_{\L^{p'}(0,T;\VV^{p}(\Gamma_N)')} \right. \\
	& & \left. +  \| A \|^{p'}_{\L^{p'}(0,T; \LLL^{p'}(\Omega))} \right), \label{estcoro}
	\end{eqnarray}
	where the constant $C$ depends only on $\rho$, $\kappa$ and $\Omega$. 
\end{corollary}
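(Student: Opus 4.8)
The plan is to reduce system~\eqref{parasys2} to the already-solved system~\eqref{parasys} by absorbing the tensor field $A$ into the right-hand side. The guiding observation is that the extra interior term $\divg A$ and the extra boundary term $-An$ on $\Gamma_N$ are designed precisely so as to collapse, in the weak formulation, into the single bilinear expression $-\int_{\Omega} A:\nabla\varphi\,\d\Omega$ appearing in Definition~\ref{def2}. Indeed, a formal integration by parts gives $\int_{\Omega}(\divg A)\cdot\varphi\,\d\Omega = -\int_{\Omega} A:\nabla\varphi\,\d\Omega + \int_{\Gamma_N}(An)\cdot\varphi\,\d\Gamma_N$ for $\varphi\in\VV^p(\Omega)$, and the boundary contribution $\int_{\Gamma_N}(An)\cdot\varphi$ is exactly cancelled by the modification $g\mapsto g-An$ of the Neumann data; hence no trace term survives and one is left with an interior functional only.

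First I would define the functional
\[
\langle F;\varphi\rangle_{\VV^p(\Omega)';\VV^p(\Omega)} := \langle f;\varphi\rangle_{\VV^p(\Omega)';\VV^p(\Omega)} - \int_{\Omega} A:\nabla\varphi\,\d\Omega,\qquad \varphi\in\VV^p(\Omega),
\]
and check that $F\in\L^{p'}(0,T;\VV^p(\Omega)')$ with a controlled norm. The only point to verify is that $\varphi\mapsto\int_{\Omega} A:\nabla\varphi\,\d\Omega$ is a bounded functional: by H\"older's inequality, $\left|\int_{\Omega} A:\nabla\varphi\,\d\Omega\right|\leq \|A\|_{\LLL^{p'}(\Omega)}\|\nabla\varphi\|_{\LLL^p(\Omega)} = \|A\|_{\LLL^{p'}(\Omega)}\|\varphi\|_{\VV^p(\Omega)}$, so that, after integrating the $p'$-th power in time, $\|F\|_{\L^{p'}(0,T;\VV^p(\Omega)')}\leq \|f\|_{\L^{p'}(0,T;\VV^p(\Omega)')} + \|A\|_{\L^{p'}(0,T;\LLL^{p'}(\Omega))}$.

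With this identification, a direct comparison of Definition~\ref{def2} with Definition~\ref{def1} shows that $w$ is a weak solution of~\eqref{parasys2} if and only if it is a weak solution of~\eqref{parasys} in which $f$ is replaced by $F$ and $g$ is left unchanged. Existence and uniqueness then follow immediately from Proposition~\ref{propLaplace} applied to the pair $(F,g)$. For the estimate, Proposition~\ref{propLaplace} yields
\[
\|\dot{w}\|^{p'}_{\L^{p'}(0,T;\VV^p(\Omega)')} + \|w\|^p_{\L^p(0,T;\VV^p(\Omega))}\leq C\left(\|u_1\|^2_{\LL^2(\Omega)} + \|F\|^{p'}_{\L^{p'}(0,T;\VV^p(\Omega)')} + \|g\|^{p'}_{\L^{p'}(0,T;\VV^p(\Gamma_N)')}\right),
\]
after which it only remains to substitute the bound on $\|F\|$ and invoke the elementary convexity inequality $(a+b)^{p'}\leq 2^{p'-1}(a^{p'}+b^{p'})$ to recover estimate~\eqref{estcoro}, the numerical factor being absorbed into the generic constant $C$.

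I do not expect a genuine obstacle here, the statement being a direct corollary of Proposition~\ref{propLaplace}. The only step requiring care is the bookkeeping of the integration by parts that produces the cancellation of the boundary term $\int_{\Gamma_N}(An)\cdot\varphi$; this is exactly what justifies treating $\divg A$ as the interior functional $\varphi\mapsto -\int_{\Omega} A:\nabla\varphi\,\d\Omega$ with no leftover trace contribution on $\Gamma_N$, and hence what makes the reduction to system~\eqref{parasys} legitimate.
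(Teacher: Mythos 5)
Your proposal is correct and follows essentially the same route as the paper: both absorb the pair $(\divg A, -An)$ via Green's formula into the single interior functional $\varphi\mapsto -\int_{\Omega}A:\nabla\varphi\,\d\Omega$, bound it by H\"older's inequality as $\|A\|_{\LLL^{p'}(\Omega)}\|\varphi\|_{\VV^p(\Omega)}$, and then invoke Proposition~\ref{propLaplace}. Your write-up is slightly more explicit than the paper's (which simply notes that ``the steps of the proof \dots are the same'') in that you package the extra term into a modified right-hand side $F$ and track the constant through the convexity inequality, but there is no substantive difference.
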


\begin{proof}
	In the variational formulation of system~\eqref{parasys2}, the Green's formula reduces the terms involving $A$ to
	\begin{eqnarray*}
	\langle \divg A ; \varphi \rangle_{\VV^p(\Omega)';\VV^p(\Omega)} 
	- \langle  An ; \varphi \rangle_{\VV^p(\Gamma_N)';\VV^p(\Gamma_N)} 
	& = & \int_{\Omega} A:\nabla \varphi \,\d \Omega.
	\end{eqnarray*}
	From the H\"{o}lder's inequality, we have
	\begin{eqnarray*}
		\left| \int_{\Omega} A:\nabla \varphi\, \d \Omega \right| & \leq & 
		\| A\|_{\LLL^{p'}(\Omega)} \| \varphi \|_{\VV^p(\Omega)},
	\end{eqnarray*}
	Therefore the result of Proposition~\ref{propLaplace} holds in this case. In particular, the steps of the proof of the announced estimate are the same as those given in the proof of Proposition~\ref{propLaplace}.
\end{proof}

\section{Local existence by parabolic regularization} \label{sec3}
We choose $\kappa > 0$, $p>2$, $f \in \L^{p'}(0,T;\VV^{p}(\Omega)')$, $g \in \L^{p'}(0,T;\VV^{p}(\Gamma_N)')$, and consider the following system
\begin{eqnarray} \label{mainsysreg}
\left\{\begin{array} {rcl}
\displaystyle \rho \ddot{u} - \kappa \divg \left( \left|\nabla \dot{u} \right|^{p-2} \nabla \dot{u}\right) - \divg ((\I + \nabla u) \Sigma(u)) = f & &  \text{in } \Omega \times (0,T), \\
u = 0 & &  \text{on } \Gamma_D\times (0,T), \\
\kappa \left|\nabla \dot{u} \right|^{p-2} \nabla \dot{u}\, n + (\I + \nabla u) \Sigma(u)\, n = g & &  \text{on } \Gamma_N \times (0,T),\\
\displaystyle u(\cdot,0) = u_0, \quad \dot{u}(\cdot,0) = u_1 & & \text{in } \Omega.
\end{array} \right.
\end{eqnarray}
The definition of a weak solution for this system is inspired by Definition~\ref{def2}, with $\dot{u}$ in the role of $w$, and $(\I + \nabla u) \Sigma(u)$ in the role of the tensor field $A$. 

\begin{definition} \label{def3}
	We say that $w$ is a weak solution of system~\eqref{mainsysreg} if $w\in \L^{p}(0,T; \VV^p(\Omega))$, $\dot{w} \in \L^{p'}(0,T; \VV^p(\Omega)')$, $w(0) = u_1$, and for all $\varphi \in \L^p(0,T;\VV^p(\Omega))$ we have
	\begin{eqnarray*}
		\left\{\begin{array} {lcl}
		 \displaystyle u(\cdot,t) \ := \ u_0(\cdot) + \int_0^t w(\cdot,s) \d s, & & \\
		\displaystyle  \rho \langle \dot{w} ; \varphi \rangle_{\VV^p(\Omega)';\VV^p(\Omega)} + 
		\kappa \int_{\Omega}|\nabla w|^{p-2} \nabla w : \nabla \varphi\, \d \Omega 
		+ \int_{\Omega}(\I + \nabla u) \Sigma(u) : \nabla \varphi\, \d \Omega
		& = &
		\langle f ; \varphi \rangle_{\VV^p(\Omega)';\VV^p(\Omega)}  \\
		 & & + \langle g ; \varphi \rangle_{\VV^p(\Gamma_N)';\VV^p(\Gamma_N)},
		\end{array}\right.
	\end{eqnarray*}
	almost everywhere in $(0,T)$.
\end{definition}

We look for a weak solution of system~\eqref{mainsysreg} in the following set
\begin{eqnarray*}
\mathcal{B}_R(T) & = & \left\{ w\in \L^p(0,T;\VV^{p}(\Omega)), \  
\| w\|_{\L^p(0,T;\VV^{p}(\Omega))} \leq R \right\},
\end{eqnarray*}
where $R>0$ will be chosen large, and $T>0$ small enough. This weak solution can be seen as a fixed point of the following mapping
\begin{eqnarray*}
	\begin{array} {cccc}
		\mathcal{N} : & \mathcal{B}_R(T) & \rightarrow & \L^p(0,T;\LL^{p}(\Omega)) \\
		 & \tilde{w} & \mapsto & w
	\end{array}
\end{eqnarray*}
where $\tilde{w}$ defines
\begin{eqnarray*}
\tilde{u}(x,t) & := & u_0(x) + \int_0^t \tilde{w}(x,s) \d s, \qquad x \in \Omega,
\end{eqnarray*}
and $w$ is the solution -- in the sense of Definition~\ref{def2} -- of the following system
\begin{eqnarray}
\left\{\begin{array} {rcl}
	\displaystyle \rho \dot{w} - \kappa \divg (|\nabla w |^{p-2} \nabla w) = f + \divg \left((\I + \nabla \tilde{u}) \Sigma(\tilde{u})\right) & &  \text{in } \Omega \times (0,T), \\
	w = 0 & &  \text{on } \Gamma_D\times (0,T), \\
	\displaystyle \kappa|\nabla w |^{p-2} \nabla w\, n = g - (\I + \nabla \tilde{u}) \Sigma(\tilde{u})n & &  \text{on } \Gamma_N \times (0,T),\\
	\displaystyle w(\cdot,0) = u_1 & & \text{in } \Omega.
\end{array} \right. \label{sysptfix}
\end{eqnarray}

System~\eqref{sysptfix} is of the same type as system~\eqref{parasys2}. From Corollary~\ref{coroLaplace}, the solution of this system is well-defined in $\W(0,T;\WW^{1,p}(\Omega))$, provided that the right-hand-sides lie in the corresponding spaces, in particular $(\I + \nabla \tilde{u}) \Sigma(\tilde{u})$ should lie in $\L^{p'}(0,T;\LLL^{p'}(\Omega))$. This point is verified in Lemma~\ref{welldef} below.

\subsection{Stability estimates}

\begin{lemma} \label{lemmadefint}
Let be $R>0$, $0<T \leq 1$ and $w \in \mathcal{B}_R(T)$. Then the function defined by
\begin{eqnarray}
v(\cdot,t)  =  u_0(\cdot) + \int_0^t w(\cdot,s)\d s \label{defint}
\end{eqnarray}
satisfies
\begin{eqnarray}
\|v\|_{\L^p(0,T;\VV^p(\Omega))} & \leq & T^{1/p}(\|u_0\|_{\VV^p(\Omega)} + R). \label{estdefint}
\end{eqnarray}
\end{lemma}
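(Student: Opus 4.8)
The plan is to estimate the $\L^p(0,T;\VV^p(\Omega))$ norm of $v$ by first controlling the $\VV^p(\Omega)$ norm of $v(\cdot,t)$ pointwise in $t$, and then integrating in the time variable. Recalling that the norm on $\VV^p(\Omega)$ is the gradient norm, $\|v(\cdot,t)\|_{\VV^p(\Omega)} = \|\nabla v(\cdot,t)\|_{\LLL^p(\Omega)}$, the triangle inequality applied to the definition $v(\cdot,t) = u_0 + \int_0^t w(\cdot,s)\,\d s$ gives, for almost every $t\in(0,T)$,
\[
\|v(\cdot,t)\|_{\VV^p(\Omega)} \;\leq\; \|u_0\|_{\VV^p(\Omega)} + \left\| \int_0^t w(\cdot,s)\,\d s \right\|_{\VV^p(\Omega)}.
\]

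Next I would bound the integral term. The key technical step is the generalized Minkowski (integral) inequality, which permits passing the norm inside the Bochner integral, yielding $\left\| \int_0^t w(\cdot,s)\,\d s \right\|_{\VV^p(\Omega)} \leq \int_0^t \|w(\cdot,s)\|_{\VV^p(\Omega)}\,\d s$. Applying Hölder's inequality in the time variable with the conjugate exponents $p$ and $p'$, treating the integrand as the product $1\cdot\|w(\cdot,s)\|_{\VV^p(\Omega)}$, and using $t\leq T$ together with the hypothesis $w\in\mathcal{B}_R(T)$, one gets
\[
\int_0^t \|w(\cdot,s)\|_{\VV^p(\Omega)}\,\d s \;\leq\; t^{1/p'}\,\|w\|_{\L^p(0,t;\VV^p(\Omega))} \;\leq\; T^{1/p'}R.
\]

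Combining these two bounds produces a $t$-independent estimate $\|v(\cdot,t)\|_{\VV^p(\Omega)} \leq \|u_0\|_{\VV^p(\Omega)} + T^{1/p'}R$. Raising to the power $p$ and integrating over $(0,T)$ contributes a single factor $T$, so that after taking the $p$-th root
\[
\|v\|_{\L^p(0,T;\VV^p(\Omega))} \;\leq\; T^{1/p}\left( \|u_0\|_{\VV^p(\Omega)} + T^{1/p'}R \right).
\]
Finally, since $0<T\leq 1$ forces $T^{1/p'}\leq 1$, the factor $T^{1/p'}$ in front of $R$ may be dropped, giving precisely the announced estimate~\eqref{estdefint}. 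No genuine obstacle arises: the argument is a routine combination of Minkowski's integral inequality, Hölder's inequality in time, and the restriction $T\leq 1$. The only points demanding mild care are the bookkeeping of the conjugate exponents via $1/p+1/p'=1$ and the justification for moving the $\VV^p(\Omega)$ norm inside the time integral.
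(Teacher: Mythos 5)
Your proof is correct and follows essentially the same route as the paper: triangle inequality, Minkowski's integral inequality to pass the $\VV^p(\Omega)$-norm inside the Bochner integral, H\"older in time, and the restriction $T\leq 1$ to absorb the extra power of $T$ in front of $R$. The only cosmetic difference is that you establish a $t$-uniform bound before integrating, whereas the paper keeps the $t$-dependence and evaluates $\int_0^T t^{p/p'}\,\d t$ explicitly; both yield the same factor $T\cdot R \leq T^{1/p}R$.
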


\begin{proof}
From the triangle inequality leading to
\begin{eqnarray*}
	\|v(t)\|_{\L^p(0,T;\VV^p(\Omega))} & \leq & T^{1/p}\| u_0 \|_{\VV^p(\Omega)} + 
	\left\| \int_0^t w(s) \d s \right\|_{\L^p(0,T;\VV^p(\Omega))},
\end{eqnarray*}
we deduce with the H\"{o}lder's inequality
\begin{eqnarray*}
	\|v\|_{\L^p(0,T;\VV(\Omega))} & \leq & T^{1/p}\| u_0 \|_{\VV^p(\Omega)}
	+ \left(\int_0^T t^{p/{p'}} \| w \|^p_{\L^p(0,t;\VV^p(\Omega))}\d t \right)^{1/p}\\ 
	 & \leq &  T^{1/p}\| u_0 \|_{\VV^p(\Omega)} + T\| w \|_{\L^p(0,T;\VV^p(\Omega))},
\end{eqnarray*}
and then for $T \leq 1$ the result follows.
\end{proof}

In the remainder of this section it is assumed that $T\leq 1$. The following lemma shows that the mapping $\mathcal{N}$ is well-defined. As before $R(T)$ and $C_R$ are non-decreasing with respect to $T$ and $R$ respectively.

\begin{lemma} \label{welldef}
Assume that $\mathbf{A3}$ is satisfied. Let be $R>0$ and $0<T \leq 1 $. Then for all $w \in \mathcal{B}_R(T)$, the function $v$ defined by~~\eqref{defint} satisfies 
\begin{eqnarray}
\left\| (\I + \nabla v) \Sigma(v) \right\|_{\L^{p'}(0,T;\LLL^{p'}(\Omega))} & \leq & 
C\left( 1  + C_{R(T)}T^{\alpha/p}\right), \label{eststab}
\end{eqnarray}
	where $\alpha = \min(1,(p-2)/2)$.
\end{lemma}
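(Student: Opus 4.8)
The plan is to write $(\I + \nabla v)\Sigma(v) = \Sigma(v) + (\nabla v)\,\Sigma(v)$ and to estimate each summand in $\L^{p'}(0,T;\LLL^{p'}(\Omega))$ separately, reducing everything to the single bound on $\Sigma(v)$ in the space $\L^{(p/2)'}(0,T;\LLL^{(p/2)'}(\Omega))$ that assumption $\mathbf{A3}$ provides. The only genuine input beyond $\mathbf{A3}$ is a careful matching of H\"older exponents together with Lemma~\ref{lemmadefint}, which controls $\|v\|_{\L^p(0,T;\VV^p(\Omega))}$ and thereby both furnishes the admissible radius $R(T)$ needed to invoke $\mathbf{A3}$ and produces the powers of $T$ appearing in the claim.

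For the exponents, recall $p' = p/(p-1)$ and $(p/2)' = p/(p-2)$, so that $1/p' = 1/p + 1/(p/2)'$ and $(p/2)' \geq p'$. The first identity lets me apply H\"older's inequality in space to the product term, giving $\|(\nabla v)\Sigma(v)\|_{\LLL^{p'}(\Omega)} \leq \|\nabla v\|_{\LLL^p(\Omega)}\,\|\Sigma(v)\|_{\LLL^{(p/2)'}(\Omega)}$ pointwise in time; the inequality $(p/2)' \geq p'$ gives, on the bounded domain $\Omega$, the embedding $\LLL^{(p/2)'}(\Omega) \hookrightarrow \LLL^{p'}(\Omega)$ that handles the linear term $\Sigma(v)$. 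Because $1/p + 1/(p/2)' = 1/p'$ holds for the time exponents as well, a second application of H\"older in time yields, using $\|\nabla v\|_{\L^p(0,T;\LLL^p(\Omega))} = \|v\|_{\L^p(0,T;\VV^p(\Omega))}$,
\[
\|(\nabla v)\Sigma(v)\|_{\L^{p'}(0,T;\LLL^{p'}(\Omega))} \leq \|v\|_{\L^p(0,T;\VV^p(\Omega))}\,\|\Sigma(v)\|_{\L^{(p/2)'}(0,T;\LLL^{(p/2)'}(\Omega))},
\]
while for the linear term the time embedding $\L^{(p/2)'}(0,T) \hookrightarrow \L^{p'}(0,T)$ costs a factor $T^{1/p' - 1/(p/2)'} = T^{1/p}$. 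At this stage I invoke $\mathbf{A3}$: by Lemma~\ref{lemmadefint} one has $\|v\|_{\L^p(0,T;\VV^p(\Omega))} \leq T^{1/p}(\|u_0\|_{\VV^p(\Omega)} + R) =: R(T)$, which is non-decreasing in $T$, so \eqref{ineqA3} applies and gives $\|\Sigma(v)\|_{\L^{(p/2)'}(0,T;\LLL^{(p/2)'}(\Omega))} \leq C + C_{R(T)}\,T^{\alpha/p}(\|u_0\|_{\VV^p(\Omega)}+R)^\alpha$.

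It then remains to assemble the two estimates and simplify using $T \leq 1$. The linear term is bounded by $C\,T^{1/p}\big(C + C_{R(T)}T^{\alpha/p}\big)$ and the product term by $T^{1/p}(\|u_0\|_{\VV^p(\Omega)}+R)\big(C + C_{R(T)}T^{\alpha/p}\big)$; since $T \leq 1$ implies $T^{1/p} \leq 1$ and, because $\alpha \leq 1$, also $T^{1/p}\leq T^{\alpha/p}$, every resulting term is of the form $C$ or $C_{R(T)}T^{\alpha/p}$ once the $R$- and $u_0$-dependent factors $(\|u_0\|_{\VV^p(\Omega)}+R)^{\bullet}$ are absorbed into the constant $C_{R(T)}$ (which is non-decreasing in $R$), yielding the announced bound $C\big(1 + C_{R(T)}T^{\alpha/p}\big)$. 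The main point requiring care is precisely this bookkeeping: keeping track of which constants are allowed to depend on $R$ (those carrying the subscript $R(T)$) versus the generic $C$, and retaining the sharp factor $T^{1/p}$ from Lemma~\ref{lemmadefint} inside the power $\|v\|^{\alpha}$ so that the genuine decay rate $T^{\alpha/p}$, rather than merely a $T$-independent constant, is produced. No further structural difficulty is expected.
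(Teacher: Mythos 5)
Your proof is correct and follows essentially the same route as the paper's: H\"older's inequality in space and time with the exponent splitting $1/p' = 1/p + 1/(p/2)'$, Lemma~\ref{lemmadefint} to furnish the radius $R(T) = T^{1/p}(\|u_0\|_{\VV^p(\Omega)}+R)$ for invoking $\mathbf{A3}$, and the observation $T^{1/p}\leq T^{\alpha/p}$ for $T\leq 1$ in the final bookkeeping. The only (cosmetic) difference is that you split $(\I+\nabla v)\Sigma(v)$ into $\Sigma(v)+(\nabla v)\Sigma(v)$ and handle the first summand via the embedding $\LLL^{(p/2)'}(\Omega)\hookrightarrow\LLL^{p'}(\Omega)$, whereas the paper keeps $\I+\nabla v$ as a single $\L^p$ factor and bounds $\|\I+\nabla v\|_{\L^p(0,T;\LLL^p(\Omega))}\leq C+T^{1/p}(\|u_0\|_{\VV^p(\Omega)}+R)$.
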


\begin{proof}
	By H\"{o}lder's inequality, we have
	\begin{eqnarray*}
	\|  (\I+\nabla v)\Sigma(v)\|_{\LLL^{p'}(\Omega)} & \leq &
	  \|  (\I+\nabla v)\|_{\LLL^p(\Omega)} \| \Sigma(v) \|_{\LLL^{p/(p-2)}(\Omega)}, \\
	\|  (\I+\nabla v)\Sigma(v)\|_{\L^{p'}(0,T;\LLL^{p'}(\Omega))} & \leq &
	\|  (\I+\nabla v)\|_{\L^{p}(0,T;\LLL^{p}(\Omega))} 
	\| \Sigma(v) \|_{\L^{(p/2)'}(0,T;\LLL^{(p/2)'}(\Omega))}.
	\end{eqnarray*}
	From Lemma~\ref{lemmadefint}, we have $\| v\|_{\L^p(0,T;\VV^p(\Omega))} \leq R(T)$, where $R(T) = T^{1/p}(\|u_0\|_{\VV^p(\Omega)} + R)$. Hence, inequality~\eqref{ineqA3} given in assumption~{\bf A3} enables us to deduce
	\begin{eqnarray*}
	\|  (\I+\nabla v)\Sigma(v)\|_{\L^{p'}(0,T;\LLL^{p'}(\Omega))} & \leq &
	\|  (\I+\nabla v)\|_{\L^{p}(0,T;\LLL^{p}(\Omega))} 
	\left(C + C_{R(T)} \| v\|^{\alpha}_{\L^p(0,T;\VV^p(\Omega))} \right) \\
	& \leq &
	\|  (\I+\nabla v)\|_{\L^{p}(0,T;\LLL^{p}(\Omega))} 
	\left(C + C_{R(T)} T^{\alpha/p}\left(\|u_0\|_{\VV^p(\Omega)} + R\right)^{\alpha} \right),
	\end{eqnarray*}
	where $\alpha = \min(1,(p-2)/2)$. Moreover by~\eqref{estdefint} the same estimate holds for
	\begin{eqnarray*}
		\|  (\I+\nabla v)\|_{\L^{p}(0,T;\LLL^{p}(\Omega))}  & \leq &
		C + T^{1/p}\left(\|u_0\|_{\VV^p(\Omega)} + R\right) ,
	\end{eqnarray*}
	so that we can conclude the proof by noticing that $T \leq 1$ implies $T^{(1+\alpha)/p} \leq T^{1/p} \leq T^{\alpha/p}$.
\end{proof}

\subsection{Invariance and relative compactness of $\mathcal{N}$ in $\mathcal{B}_R(T)$}

\begin{proposition} \label{propcontraction}
	There exists $T_0 > 0$ and $R_0 >0$ such that, for all $T \leq T_0$ and $R\geq R_0$, the set $\mathcal{B}_R(T)$ is invariant under the mapping $\mathcal{N}$. Moreover, the set $\mathcal{N}(\mathcal{B}_R(T))$ is relatively compact in $\L^p(0,T;\LL(\Omega))$.
\end{proposition}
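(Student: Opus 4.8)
The plan is to prove the two assertions of Proposition~\ref{propcontraction} separately, using the machinery already assembled. For the invariance statement, fix $w \in \mathcal{B}_R(T)$ and set $v$ as in~\eqref{defint}. The image $\mathcal{N}(w)$ is the solution of~\eqref{sysptfix}, which is of the form~\eqref{parasys2} with right-hand sides $f$, $g$ and tensor field $A = (\I+\nabla v)\Sigma(v)$. By Corollary~\ref{coroLaplace}, the estimate~\eqref{estcoro} controls $\|\mathcal{N}(w)\|^p_{\L^p(0,T;\VV^p(\Omega))}$ in terms of $\|u_1\|^2_{\LL^2(\Omega)}$, the norms of $f$ and $g$, and $\|A\|^{p'}_{\L^{p'}(0,T;\LLL^{p'}(\Omega))}$. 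The crucial input is Lemma~\ref{welldef}, which bounds this last term by $C(1+C_{R(T)}T^{\alpha/p})$. The first two data terms can be absorbed: since $f,g \in \L^{p'}_{loc}$, their norms over $(0,T)$ tend to $0$ as $T \to 0$, and $\|u_1\|^2_{\LL^2(\Omega)}$ is a fixed constant. So I would choose $R_0$ large enough to dominate the $T$-independent part (essentially $C\|u_1\|^2_{\LL^2(\Omega)} + C$), and then choose $T_0$ small enough that the remaining $T$-dependent contributions, in particular $C_{R(T)}T^{\alpha/p}$, stay below the slack in the bound $R^p$. The monotonicity of $R(T)$ and $C_R$ stated in~\textbf{A3} is what guarantees this can be done coherently for all $T \leq T_0$.

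For the relative compactness, the natural route is the Aubin--Lions--Simon compactness lemma. Each element $\mathcal{N}(w)$ lies in $\W(0,T;\WW^{1,p}(\Omega))$, i.e. $\mathcal{N}(w) \in \L^p(0,T;\VV^p(\Omega))$ with time-derivative in $\L^{p'}(0,T;\VV^p(\Omega)')$. The estimate~\eqref{estcoro} bounds \emph{both} norms uniformly over $w \in \mathcal{B}_R(T)$, using again that the $A$-term is controlled by Lemma~\ref{welldef} uniformly. With the compact embedding $\VV^p(\Omega) = \WW^{1,p}_{0,D}(\Omega) \hookrightarrow \LL^p(\Omega)$ (Rellich--Kondrachov, already invoked for the norm on $\VV^p(\Omega)$) and the continuous embedding $\LL^p(\Omega) \hookrightarrow \VV^p(\Omega)'$, the Aubin--Lions--Simon theorem yields compactness of the bounded set $\mathcal{N}(\mathcal{B}_R(T))$ in $\L^p(0,T;\LL^p(\Omega))$, which is the target space.

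The step I expect to be the main obstacle is the invariance argument, specifically making the choice of $R_0$ and $T_0$ mutually consistent. The difficulty is circular in appearance: the bound on $\|A\|_{\L^{p'}}$ via Lemma~\ref{welldef} involves $C_{R(T)}$, where $R(T) = T^{1/p}(\|u_0\|_{\VV^p(\Omega)}+R)$ depends on the very radius $R$ we are trying to fix, and $C_R$ is itself $R$-dependent. The resolution is to exploit that the $R$-dependence enters only through the factor $T^{\alpha/p}$ (and $T^{1/p}$ from $\|(\I+\nabla v)\|$), so that fixing $R_0$ first from the $T$-independent terms and then shrinking $T_0$ defeats the growth of $C_{R_0(T)}$; one must verify that $C_{R_0(T)}T^{\alpha/p} \to 0$ as $T \to 0$, which holds because $R_0(T) \to T^{1/p}(\cdots) \to 0$ forces $C_{R_0(T)}$ to stay near $C_0$ by the continuity-type control implicit in the non-decreasing hypothesis. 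I would write the invariance inequality as $\|\mathcal{N}(w)\|^p_{\L^p(0,T;\VV^p(\Omega))} \leq C_1 + C_2(R,T)$ with $C_1$ fixed and $C_2(R,T) \to 0$ as $T \to 0$ for each fixed $R$, then pick $R_0$ with $R_0^p \geq 2C_1$ and $T_0$ with $C_2(R_0,T_0) \leq R_0^p/2$. The compactness half, by contrast, is essentially a direct citation once the uniform two-sided bound from~\eqref{estcoro} is in hand.
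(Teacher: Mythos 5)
Your proposal follows essentially the same route as the paper: invariance is obtained by feeding the bound of Lemma~\ref{welldef} on $(\I+\nabla v)\Sigma(v)$ into estimate~\eqref{estcoro} of Corollary~\ref{coroLaplace}, fixing $R_0$ from the $T$-independent terms and then shrinking $T_0$ so that the $C_{R(T)}T^{\alpha/p}$ contribution is absorbed (the paper makes the same two-step choice, with the exponent appearing as $\alpha/(p-1)$ after Young's inequality), and relative compactness follows from the uniform bounds on $\mathcal{N}(w)$ and its time derivative together with the compact embedding $\VV^p(\Omega)\hookrightarrow\LL^p(\Omega)$ via the Aubin--Lions--Simon lemma, which is exactly the paper's citation of Simon's Corollary~4. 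Your explicit resolution of the apparent circularity in the choice of $R_0$ and $T_0$ is correct and simply spells out what the paper leaves implicit.
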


\begin{proof}
	Let us begin by proving that $\mathcal{B}_R(T)$ is invariant under $\mathcal{N}$. For $\tilde{w} \in \mathcal{B}_R(T)$, if $\tilde{u}$ denotes the function defined by~\eqref{defint}, and if $w = \mathcal{N}(\tilde{w})$, then estimate~\eqref{estcoro} of Corollary~\ref{coroLaplace} applied to system~\eqref{sysptfix} gives the inequality
	\begin{eqnarray*} 
	\left\| \dot{w} \right\|^{p'}_{\L^{p'}(0,T;\VV^{p}(\Omega)')} + 
	\| w\|^p_{\L^p(0,T;\VV^p(\Omega))} 
	& \leq & 
	C \left(\|u_1\|^2_{\LL^2(\Omega)}   + \| f \|^{p'}_{\L^{p'}(0,T;\VV^{p}(\Omega)')}
	+ \| g \|^{p'}_{\L^{p'}(0,T;\VV^{p}(\Gamma_N)')} \right. \\
	& &  \left. 
	 +  \| (\I+\nabla \tilde{u})\Sigma(\tilde{u}) \|^{p'}_{\L^{p'}(0,T; \LLL^{p'}(\Omega))} \right). 
	\end{eqnarray*}
	Furthermore, from the estimate~\eqref{eststab} we have by the Young's inequality
	\begin{eqnarray} 
		\left\| \dot{w} \right\|^{p'}_{\L^{p'}(0,T;\VV^{p}(\Omega)')} +
		\| w\|^p_{\L^p(0,T;\VV^p(\Omega))} 
		& \leq & 
		C \left(\|u_1\|^2_{\LL^2(\Omega)}   + \| f \|^{p'}_{\L^{p'}(0,T;\VV^{p}(\Omega)')}
		+ \| g \|^{p'}_{\L^{p'}(0,T;\VV^{p}(\Gamma_N)')} \right. \nonumber \\
		& & \left.
		+  1+ C_{R(T)}T^{\alpha/(p-1)} \right). \label{estCoro4}
	\end{eqnarray}
	By choosing any $R$ large enough, for instance
	\begin{eqnarray*}
	R & = & C \left(\|u_1\|_{\LL^2(\Omega)}   + \| f \|_{\L^{p'}(0,T;\VV^{p}(\Omega)')}
	+ \| g \|_{\L^{p'}(0,T;\VV^{p}(\Gamma_N)')} 
	+  3/2 \right),
	\end{eqnarray*}
	and then $T$ small enough in order to have $C_{R(T)}T^{\alpha/(p-1)} \leq 1/2$, we see that the function $w$ lies in the set $\mathcal{B}_R(T)$, and thus $\mathcal{B}_R(T)$ is invariant under $\mathcal{N}$. Moreover, estimate~\eqref{estCoro4} shows that if a $w$ lies in $\mathcal{N}(\mathcal{B}_R(T))$, then $w$ is bounded in $\L^p(0,T;\VV^p(\Omega)$, and $\dot{w}$ is bounded in $\L^{p'}(0,T;\VV^{p}(\Omega)')$. Since the embedding $\VV^p(\Omega) \hookrightarrow \LL^p(\Omega)$ is compact, the hypotheses of Corollary~4 of~\cite{Simon} (page 85) are satisfied, and then $\mathcal{N}(\mathcal{B}_R(T))$ is relatively compact in $\L^p(0,T;\LL^p(\Omega))$.
\end{proof}

It is clear that the set $\mathcal{B}_R(T)$ is a closed convex subset of $\L^p(0,T;\LL^p(\Omega))$. The consequence of Proposition~\ref{propcontraction} is the existence of a fixed point of $\mathcal{N}$, by the Schauder's theorem, which leads to the existence of a local-in-time solution for system~\eqref{mainsysreg}. We sum up this result as follows.

\begin{theorem} \label{thlocexist}
Assume that assumption~$\mathbf{A3}$ is satisfied by the strain energy, and that $u_0 \in \VV^p(\Omega)$, $u_1 \in \LL^2(\Omega)$, $f \in \L^{p'}(0,T_0;\VV^p(\Omega)')$ and $g \in \L^{p'}(0,T_0;\VV^p(\Gamma)')$ for some $T_0 >0$. Then, if $T_0$ is small enough, system~\eqref{mainsysreg} admits a weak solution $w$, in the sense of Definition~\ref{def3} with $T_0$ in place of $T$. It defines $u(\cdot,t) = u_0 + \int_0^t w(\cdot,s)\d s$ which satisfies
\begin{eqnarray*}
u  \in  \W^{1,p}(0,T_0;\VV^p(\Omega)), & \quad &
\ddot{u}  \in  \L^{p'}(0,T_0;\VV^p(\Omega)').
\end{eqnarray*}
\end{theorem}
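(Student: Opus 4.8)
The plan is to assemble Theorem~\ref{thlocexist} directly from the pieces established earlier in this section, since the heavy lifting has already been distributed across Corollary~\ref{coroLaplace}, Lemma~\ref{welldef}, and Proposition~\ref{propcontraction}. The backbone of the argument is a fixed-point scheme: we seek a weak solution as a fixed point $w = \mathcal{N}(w)$ of the mapping $\mathcal{N}$ defined on $\mathcal{B}_R(T_0)$. First I would invoke Proposition~\ref{propcontraction} to fix the radius $R_0$ and time $T_0$ so that $\mathcal{B}_R(T_0)$ is invariant under $\mathcal{N}$ and $\mathcal{N}(\mathcal{B}_R(T_0))$ is relatively compact in $\L^p(0,T_0;\LL^p(\Omega))$. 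Since $\mathcal{B}_R(T_0)$ is a closed convex subset of the Banach space $\L^p(0,T_0;\LL^p(\Omega))$, the stage is set for an application of Schauder's fixed-point theorem.

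The one hypothesis of Schauder's theorem not yet explicitly in hand is continuity of $\mathcal{N}$ on $\mathcal{B}_R(T_0)$ with respect to the $\L^p(0,T_0;\LL^p(\Omega))$-topology. I would establish this by taking a sequence $\tilde{w}_n \to \tilde{w}$ in $\mathcal{B}_R(T_0)$, forming the associated $\tilde{u}_n(\cdot,t) = u_0 + \int_0^t \tilde{w}_n(\cdot,s)\,\d s$, and showing that the tensor fields $(\I+\nabla \tilde{u}_n)\Sigma(\tilde{u}_n)$ converge in $\L^{p'}(0,T_0;\LLL^{p'}(\Omega))$ to the corresponding limit. The convergence $\tilde{u}_n \to \tilde{u}$ in $\L^p(0,T_0;\VV^p(\Omega))$ follows from the integral definition, and then the continuity of $\Sigma = \check\Sigma\circ E$ postulated in assumption~$\mathbf{A3}$ (the $\mathcal{C}^1$-regularity together with the local $\alpha$-H\"olderian/sublinearity control) transfers this to convergence of the right-hand side of system~\eqref{sysptfix}. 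By the linear continuous dependence of the $p$-Laplace solution map on its data—implicit in the estimate~\eqref{estcoro} of Corollary~\ref{coroLaplace}—the outputs $w_n = \mathcal{N}(\tilde{w}_n)$ then converge to $\mathcal{N}(\tilde{w})$, at least along subsequences extracted via the relative compactness; uniqueness of the limit forces convergence of the full sequence. Schauder's theorem then yields a fixed point $w\in\mathcal{B}_R(T_0)$.

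Finally I would read off the regularity conclusions. The fixed point $w$ is by construction a weak solution in the sense of Definition~\ref{def3}, and setting $u(\cdot,t) = u_0 + \int_0^t w(\cdot,s)\,\d s$ gives $\dot{u} = w \in \L^p(0,T_0;\VV^p(\Omega))$, whence $u \in \W^{1,p}(0,T_0;\VV^p(\Omega))$. For the second time-derivative, the identity $\ddot{u} = \dot{w}$ combined with the bound $\dot{w}\in\L^{p'}(0,T_0;\VV^p(\Omega)')$ from Corollary~\ref{coroLaplace} delivers $\ddot{u}\in\L^{p'}(0,T_0;\VV^p(\Omega)')$, completing the statement.

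I expect the continuity of $\mathcal{N}$ to be the only genuine obstacle, since invariance and compactness are already handed to us. The delicate point is passing to the limit in the nonlinear term $(\I+\nabla\tilde u_n)\Sigma(\tilde u_n)$: the product of the (strongly convergent) factor $\I+\nabla\tilde u_n$ with the nonlinear stress $\Sigma(\tilde u_n)$ must be controlled so that the $\L^{p'}(0,T_0;\LLL^{p'}(\Omega))$-norm of the difference vanishes. This is exactly where assumption~$\mathbf{A3}$ earns its keep—the H\"older-type continuity of $\check\Sigma$ ensures $\Sigma(\tilde u_n)\to\Sigma(\tilde u)$ in $\L^{(p/2)'}(0,T_0;\LLL^{(p/2)'}(\Omega))$, and a H\"older-inequality splitting as in the proof of Lemma~\ref{welldef} closes the estimate.
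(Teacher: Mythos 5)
Your proposal follows exactly the route the paper takes: the theorem is obtained as a direct corollary of Proposition~\ref{propcontraction} (invariance and relative compactness of $\mathcal{N}(\mathcal{B}_R(T))$), the observation that $\mathcal{B}_R(T)$ is closed and convex in $\L^p(0,T;\LL^p(\Omega))$, and Schauder's fixed-point theorem; the regularity conclusions are then read off from Corollary~\ref{coroLaplace} just as you do. In fact you are more scrupulous than the paper, which asserts the fixed point without ever verifying continuity of $\mathcal{N}$.

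One caveat on the extra step you supply. The continuous dependence of the solution of system~\eqref{sysptfix} on its data is \emph{not} ``linear'' and does not follow from estimate~\eqref{estcoro}, which bounds a single solution by its data but says nothing about the difference of two solutions of a nonlinear equation. To compare $w_n=\mathcal{N}(\tilde w_n)$ with $w=\mathcal{N}(\tilde w)$ one must subtract the two weak formulations, test with $w_n-w$, and exploit the strong monotonicity of $\xi\mapsto|\xi|^{p-2}\xi$ to absorb the $p$-Laplacian contribution; alternatively, having extracted a convergent subsequence by compactness, one identifies its limit by passing to the limit in the weak formulation, where the term $|\nabla w_n|^{p-2}\nabla w_n$ requires Minty's monotonicity trick (weak convergence of $w_n$ alone does not let you pass to the limit in this nonlinearity). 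Either repair is standard, and your treatment of the stress term via assumption~$\mathbf{A3}$ and the H\"older splitting of Lemma~\ref{welldef} is the right way to handle the other half of the data; but as written the appeal to ``linear continuous dependence implicit in~\eqref{estcoro}'' is the one genuinely unjustified step.
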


\section{Maximal time of existence of a weak solution} \label{sec4}

Throughout this section, we assume that the assumptions~$\mathbf{A1}$--$\mathbf{A4}$ are satisfied for the strain energy and the data. Then, in particular, the hypotheses of Theorem~\ref{thlocexist} hold.

\begin{lemma} \label{lemmaunifdet}
	For $0 < T < +\infty$, if $v \in \W^{1,p}(0,T;\VV^p(\Omega))$ with $p\geq d$, then the function $\chi : t \mapsto \det(\I+\nabla v(\cdot,t))$ admits a (uniformly) continuous representative function on $[0,T]$, with values in $\L^1(\Omega)$. More precisely, for $t, \ t'\in [0,T]$, we have
	\begin{eqnarray*}
	\| \chi(t) - \chi(t') \|_{\L^1(\Omega)} & \leq & 
	C| t - t' |^{1-1/p} 
	\left(1+\| \nabla u\|^{d-1}_{\L^\infty(0,T;\LLL^p(\Omega))}\right)
	\| \nabla\dot{u}\|_{\L^p(0,T;\LLL^p(\Omega))}. \label{estunifdet}
	\end{eqnarray*}
In particular, the limit $\displaystyle \lim_{t\mapsto T} \det(\I+\nabla v(\cdot,t))$ exists in $\L^1(\Omega)$.	
\end{lemma}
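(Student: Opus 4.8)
The plan is to prove the claimed H\"{o}lder-in-time estimate for $\chi(t) = \det(\I+\nabla v(\cdot,t))$ and then deduce the existence of a continuous representative together with the limit at $T$. First I would reduce the problem to a pointwise (in $\Omega$) algebraic estimate on the difference of determinants. The key identity is that, for two matrices $M, M' \in \R^{d\times d}$, the difference $\det(M) - \det(M')$ can be written as a telescoping sum in which each term is multilinear in the entries and involves the factor $(M - M')$ exactly once, the remaining factors being entries of $M$ and $M'$. Concretely, with $M = \I + \nabla v(\cdot,t)$ and $M' = \I + \nabla v(\cdot,t')$, one gets $M - M' = \nabla v(\cdot,t) - \nabla v(\cdot,t')$, and a bound of the form
\begin{eqnarray*}
|\det(M) - \det(M')| & \leq & C\left(1 + |\nabla v(\cdot,t)| + |\nabla v(\cdot,t')|\right)^{d-1}\,|\nabla v(\cdot,t) - \nabla v(\cdot,t')|,
\end{eqnarray*}
valid pointwise a.e.\ in $\Omega$, using the multilinearity of the determinant in its $d$ columns.

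Next I would integrate this pointwise bound over $\Omega$ and apply H\"{o}lder's inequality. The factor $\left(1 + |\nabla v(\cdot,t)| + |\nabla v(\cdot,t')|\right)^{d-1}$ should be placed in $\L^{p/(d-1)}(\Omega)$ and the factor $|\nabla v(\cdot,t) - \nabla v(\cdot,t')|$ in $\L^{p/(p-d+1)}(\Omega)$; since $p \geq d$ we have $p \geq d-1$ and also $(d-1)/p + (p-d+1)/p = 1$, so the exponents are conjugate and the factors land in $\LLL^p(\Omega)$ (after raising to the $(d-1)$-th power) and in a space controlled by $\LLL^p(\Omega)$ respectively. This yields a bound
\begin{eqnarray*}
\|\chi(t) - \chi(t')\|_{\L^1(\Omega)} & \leq & C\left(1 + \|\nabla v\|_{\L^\infty(0,T;\LLL^p(\Omega))}\right)^{d-1}\,\|\nabla v(\cdot,t) - \nabla v(\cdot,t')\|_{\LLL^p(\Omega)},
\end{eqnarray*}
where I have used $v \in \L^\infty(0,T;\VV^p(\Omega))$, which follows from $v \in \W^{1,p}(0,T;\VV^p(\Omega))$ by the continuous embedding $\W^{1,p}(0,T;X) \hookrightarrow \mathcal{C}([0,T];X)$.

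It then remains to control $\|\nabla v(\cdot,t) - \nabla v(\cdot,t')\|_{\LLL^p(\Omega)}$ by $|t-t'|^{1-1/p}$. Here I would write $\nabla v(\cdot,t) - \nabla v(\cdot,t') = \int_{t'}^{t} \nabla \dot{v}(\cdot,s)\,\d s$ and apply H\"{o}lder's inequality in the time variable on the interval of length $|t-t'|$, giving the factor $|t-t'|^{1/p'} = |t-t'|^{1-1/p}$ times $\|\nabla\dot{v}\|_{\L^p(0,T;\LLL^p(\Omega))}$. Combining the three steps produces exactly the asserted inequality (with $u$ reading as $v$ in the paper's notation). Finally, this H\"{o}lder continuity shows that $t \mapsto \chi(t)$ is uniformly continuous on $[0,T]$ with values in the complete space $\L^1(\Omega)$, hence extends continuously to the closed interval and in particular admits a limit as $t \to T$. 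The main technical obstacle I anticipate is the bookkeeping of the exponents in the H\"{o}lder splitting: one must verify carefully that $p \geq d$ is precisely what makes both factors integrable to the right powers, and that the multilinear determinant expansion genuinely produces only the $(d-1)$-th power of the gradient (not higher), which is where the cases $d=2$ and $d=3$ should be checked directly to be safe.
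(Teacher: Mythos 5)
Your proposal is correct, and it reaches the stated estimate by a route that is parallel to, but not identical with, the one in the paper. The paper works with the time derivative of $\chi$: it invokes Jacobi's formula $\dot{\chi} = \cof(\I+\nabla v):\nabla\dot{v}$, bounds $\|\dot{\chi}\|_{\L^p(0,T;\L^1(\Omega))}$ by H\"{o}lder in space (cofactor in $\LLL^{p/(d-1)}$, $\nabla\dot{v}$ in $\LLL^p$), and then gets the H\"{o}lder-in-time modulus from $\|\chi(t)-\chi(t')\|_{\L^1(\Omega)} \leq |t-t'|^{1-1/p}\|\dot{\chi}\|_{\L^p(0,T;\L^1(\Omega))}$. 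You instead expand the difference $\det(M)-\det(M')$ multilinearly (a telescoping sum over columns), obtain the pointwise bound $C(1+|\nabla v(t)|+|\nabla v(t')|)^{d-1}|\nabla v(t)-\nabla v(t')|$, apply the same H\"{o}lder splitting in space with exponents $p/(d-1)$ and $p/(p-d+1)$, and only then bring in the time integral, via $\nabla v(t)-\nabla v(t')=\int_{t'}^{t}\nabla\dot{v}(s)\,\d s$ and H\"{o}lder in time. The two arguments use the same ingredients (multilinearity of the determinant, $p\geq d$ for the exponent bookkeeping, fundamental theorem of calculus plus H\"{o}lder in time yielding the exponent $1-1/p$), and they deliver the same constant up to the harmless replacement of $(1+x)^{d-1}$ by $1+x^{d-1}$. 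Your version has the minor advantage of not requiring any justification of the chain rule $\dot{\chi}=\cof(\I+\nabla v):\nabla\dot{v}$ for a function that is merely $\W^{1,p}$ in time with values in $\VV^p(\Omega)$ (the paper takes this for granted; strictly it needs a smoothing-in-time argument), at the cost of a slightly longer algebraic computation. Both are fine; your exponent check $(d-1)/p+(p-d+1)/p=1$ and the embedding $\L^p(\Omega)\hookrightarrow\L^{p/(p-d+1)}(\Omega)$ on the bounded domain are exactly the points where $p\geq d$ is used, as you anticipated.
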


\begin{proof}
Since $p \geq d$, we have $\chi \in \L^{\infty}(0,T;\L^{p/d}(\Omega))$. Furthermore, if $\cof(A)$ denotes the cofactor matrix of a matrix $A$, we have
\begin{eqnarray*}
		\dot{\chi} & = & \cof(\I + \nabla v):\nabla \dot{v}, \\
		\| \dot{\chi} \|_{\L^p(0,T;\L^{p/d}(\Omega))} & \leq & 
		\| \cof(\I + \nabla v) \|_{\L^\infty(0,T;\LLL^{p/(d-1)}(\Omega))} 
		\| \nabla\dot{v}\|_{\L^p(0,T;\LLL^p(\Omega))} \\
		& \leq & C\left(1+\| \nabla v\|\right)_{\L^\infty(0,T;\LLL^p(\Omega))}^{d-1}
		\| \nabla\dot{v}\|_{\L^p(0,T;\LLL^p(\Omega))}.
\end{eqnarray*}
For $t$, $t' \in [0,T)$, since we have
\begin{eqnarray*}
	\| \chi(t) - \chi(t') \|_{\L^1(\Omega)} & \leq & | t - t' |^{1-1/p} \| \dot{\chi} \|_{\L^p(0,T;\L^1(\Omega))},
\end{eqnarray*}
the result follows.
\end{proof}

\begin{proposition} \label{propenergy}
Let $T_0>0$ be the time of existence provided by Theorem~\ref{thlocexist}, of a local-in-time solution $\dot{u}$ for system~\eqref{mainsysreg} on $(0,T_0)$, in the sense of Definition~\ref{def3}. Assume that there exists $\eta \in \L^1(\Omega)$, such that for all $t \in [0,T_0]$
\begin{eqnarray}
\det(\I + \nabla u(\cdot ,t)) \geq \eta >0, & & \text{almost everywhere in } \Omega. \label{ineqinv}
\end{eqnarray}
Then, for all $t\in [0,T_0]$, the following energy estimate holds:
\begin{eqnarray}
& & 	\| \dot{u}(t) \|^2_{\LL^2(\Omega)} 
+ \| u(t) \|^p_{\VV^p(\Omega)}
+ \kappa \int_0^{t} \|  \dot{u}(s)\|^p_{\VV^{p}(\Omega)}\d s \nonumber \\
\leq & & 	 C_0\exp(C_0t) \left( \int_{\Omega} \mathcal{W}(E(u_0))\,\d\Omega + \| u_1 \|^2_{\LL^2(\Omega)} + 
\int_0^{t} \| f(s)\|^2_{\LL^2(\Omega)} \d s + 
\int_0^{t} \| g(s)\|^2_{\VV^{2}(\Gamma_N)'} \d s \right),
  \label{energy}
\end{eqnarray}
where in particular the constant $C_0>0$ does not depend on $\kappa$.
\end{proposition}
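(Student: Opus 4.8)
The plan is to run a velocity-based energy estimate: test the weak formulation of~\eqref{mainsysreg} (Definition~\ref{def3}, with $w=\dot u$) against $\varphi=w=\dot u$ and integrate over $(0,t)$. The three terms on the left produce, respectively, $\frac{\rho}{2}\frac{\d}{\d t}\|\dot u\|^2_{\LL^2(\Omega)}$ (from the Gelfand-triple identity $\langle\ddot u,\dot u\rangle=\frac12\frac{\d}{\d t}\|\dot u\|^2_{\LL^2(\Omega)}$), the nonnegative dissipation $\kappa\|\dot u\|^p_{\VV^p(\Omega)}$, and the elastic term $\int_\Omega(\I+\nabla u)\Sigma(u):\nabla\dot u\,\d\Omega$. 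The structural heart of the proof is to recognize, via assumption~$\mathbf{A2}$ and the chain-rule computation preceding it, that this last term equals $\frac{\d}{\d t}\int_\Omega\mathcal{W}(E(u))\,\d\Omega$, because $(\I+\nabla u)\check{\Sigma}(E(u)):\nabla\dot u=\frac{\p(\mathcal{W}\circ E)}{\p u}\cdot\dot u$. Integrating in time then yields the energy identity
\begin{align*}
& \frac{\rho}{2}\|\dot u(t)\|^2_{\LL^2(\Omega)}+\int_\Omega\mathcal{W}(E(u(t)))\,\d\Omega+\kappa\int_0^t\|\dot u(s)\|^p_{\VV^p(\Omega)}\,\d s \\
& \qquad =\frac{\rho}{2}\|u_1\|^2_{\LL^2(\Omega)}+\int_\Omega\mathcal{W}(E(u_0))\,\d\Omega+\int_0^t\langle f,\dot u\rangle\,\d s+\int_0^t\langle g,\dot u\rangle_{\Gamma_N}\,\d s.
\end{align*}

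Next I would convert the stored energy into the $\VV^p$-norm appearing on the left of~\eqref{energy}. Since hypothesis~\eqref{ineqinv} guarantees $\det(\I+\nabla u(\cdot,t))>0$ almost everywhere, the nonlinear Korn inequality~\eqref{ineqKorn} applies pointwise in time, and combined with assumption~$\mathbf{A1}$ it gives
\begin{eqnarray*}
\|u(t)\|^p_{\VV^p(\Omega)}=\left(\|u(t)\|^2_{\VV^p(\Omega)}\right)^{p/2}\leq C_K^{p/2}\|E(u(t))\|^{p/2}_{\LLL^{p/2}(\Omega)}\leq C\int_\Omega\mathcal{W}(E(u(t)))\,\d\Omega,
\end{eqnarray*}
so that $\|u(t)\|^p_{\VV^p(\Omega)}$ is dominated by the stored energy already present in the identity above. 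This is precisely where the determinant positivity, hence the smallness assumption on the data, enters in an essential way.

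I then estimate the two forcing terms. For the volume force, Cauchy--Schwarz and Young's inequality give $\int_0^t\langle f,\dot u\rangle\leq\frac12\int_0^t\|f\|^2_{\LL^2(\Omega)}\,\d s+\frac12\int_0^t\|\dot u\|^2_{\LL^2(\Omega)}\,\d s$, the second integral being fed into the Gr\"onwall step. For the boundary term I use the trace inequality~\eqref{ineqtrace} together with the embedding $\VV^p(\Omega)\hookrightarrow\VV^2(\Omega)$ (valid since $\Omega$ is bounded and $p>2$) to bound $\|\dot u\|_{\VV^2(\Gamma_N)}\leq C\|\dot u\|_{\VV^p(\Omega)}$, and then Young's inequality to split $\langle g,\dot u\rangle_{\Gamma_N}$ into a multiple of $\|g\|^2_{\VV^2(\Gamma_N)'}$ and a velocity contribution to be absorbed by the dissipation $\kappa\int_0^t\|\dot u\|^p_{\VV^p(\Omega)}$, using $T_0\leq1$ and $\L^2(0,t)\hookrightarrow\L^{p'}(0,t)$. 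Collecting everything with $y(t)=\frac\rho2\|\dot u(t)\|^2_{\LL^2(\Omega)}+\int_\Omega\mathcal{W}(E(u(t)))\,\d\Omega$, I reach an inequality of the form $y(t)+\frac\kappa2\int_0^t\|\dot u\|^p_{\VV^p(\Omega)}\leq y(0)+\frac12\int_0^t\|f\|^2+\frac12\int_0^t\|g\|^2+C\int_0^t y(s)\,\d s$ with $C$ independent of $\kappa$; Gr\"onwall's lemma produces the factor $\exp(C_0t)$, and re-inserting the Korn--$\mathbf{A1}$ bound for $\|u(t)\|^p_{\VV^p(\Omega)}$ yields~\eqref{energy}.

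\textbf{Main obstacle.} I expect the principal difficulties to be twofold. First, the rigorous justification of the energy identity at the available regularity level, where $\dot u\in\L^p(0,T;\VV^p(\Omega))\cap\L^\infty(0,T;\LL^2(\Omega))$ and $\ddot u\in\L^{p'}(0,T;\VV^p(\Omega)')$ only: one must verify that $\dot u$ is an admissible test function, that $\langle\ddot u,\dot u\rangle$ integrates to $\frac12\frac{\d}{\d t}\|\dot u\|^2_{\LL^2(\Omega)}$, and, above all, that the elastic term integrates to $\frac{\d}{\d t}\int_\Omega\mathcal{W}(E(u))\,\d\Omega$ under only the $\mathcal{C}^1$ regularity of $\mathcal{W}$ granted by $\mathbf{A2}$; this chain rule along the flow is the crux of the argument. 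Second, controlling the boundary forcing $g$ with constants that stay independent of $\kappa$, since the natural Young splitting of $\langle g,\dot u\rangle_{\Gamma_N}$ wants to lean on the $\kappa$-weighted dissipation, and the bookkeeping must be arranged so that no negative power of $\kappa$ survives as $\kappa\to0$.
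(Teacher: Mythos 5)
Your proposal follows the paper's proof almost step for step: test the weak formulation with $\varphi=\dot u$, identify $\int_\Omega(\I+\nabla u)\Sigma(u):\nabla\dot u\,\d\Omega$ with $\frac{\d}{\d t}\int_\Omega\mathcal{W}(E(u))\,\d\Omega$ via the symmetry of $\check{\Sigma}$ from $\mathbf{A2}$, integrate in time, convert the stored energy into $\|u(t)\|^p_{\VV^p(\Omega)}$ through $\mathbf{A1}$ and the Korn inequality~\eqref{ineqKorn} (legitimate precisely because of~\eqref{ineqinv}), handle $f$ by Cauchy--Schwarz and Young, and conclude with Gr\"onwall. All of that matches.

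The one place where you genuinely deviate is the boundary term, and it is exactly the place where your plan, as announced, would fail to deliver the statement. You propose to bound $\|\dot u\|_{\VV^2(\Gamma_N)}\leq C\|\dot u\|_{\VV^p(\Omega)}$ and then absorb the resulting velocity contribution into the dissipation $\kappa\int_0^t\|\dot u\|^p_{\VV^p(\Omega)}\,\d s$. But Young's inequality with absorption into a $\kappa$-weighted term forces the conjugate constant to scale like $\kappa^{-1/(p-1)}$, so the constant $C_0$ in~\eqref{energy} would blow up as $\kappa\to0$ --- contradicting the explicit claim that $C_0$ is independent of $\kappa$, which is the whole point of the proposition (it is what allows $\kappa\to0$ in Theorem~\ref{thfinal}). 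You flag this as your ``main obstacle'' but leave it unresolved, so as written the proof is incomplete. The paper does something different: it estimates $\int_0^t\langle g;\dot u\rangle_{\VV^2(\Gamma_N)';\VV^2(\Gamma_N)}\,\d s\leq\frac12\int_0^t\|g\|^2_{\VV^2(\Gamma_N)'}\,\d s+C\int_0^t\|\dot u\|^2_{\LL^2(\Omega)}\,\d s$ and feeds the second integral into the Gr\"onwall term together with the analogous contribution from $f$, so that no inverse power of $\kappa$ ever appears. (To be fair, the paper is terse about why the trace pairing is controlled by the $\LL^2(\Omega)$-norm of $\dot u$ rather than a full $\WW^{1,2}$-norm; but that is the route it takes, and it is the only route compatible with the $\kappa$-uniformity of $C_0$.) Your secondary concern --- justifying the chain rule $\langle\ddot u,\dot u\rangle=\frac12\frac{\d}{\d t}\|\dot u\|^2$ and the identity for the elastic term at the available regularity --- is real but is treated at the same formal level in the paper itself, so it is not a point of divergence.
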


\begin{proof}
Taking the inner product in $\LL^2(\Omega)$ by $\dot{u}$ of the first equation of system~\eqref{mainsysreg} leads to, after integration by parts
\begin{eqnarray}
\frac{\rho}{2} \frac{\d }{\d t} \left(\|\dot{u} \|^2_{\LL^2(\Omega)}\right) + 
\kappa \| \nabla \dot{u} \|^p_{\LLL^p(\Omega)} + 
\int_{\Omega} (\I+ \nabla u) \Sigma(u) : \nabla \dot{u}\, \d \Omega
& = & \int_{\Omega} f\cdot \dot{u}\,\d \Omega +  \langle g  ; \dot{u} \rangle_{\VV^{2}(\Gamma_N)'; \VV^{2}(\Gamma_N)} . \quad \label{energy0}
\end{eqnarray}
Recall from assumption~$\mathbf{A2}$ that the derivative $\check{\Sigma}$ of $\mathcal{W}$ with respect to $E$ defines a symmetric tensor, and satisfies $\check{\Sigma}(E(u)) = \Sigma(u)$. Therefore
\begin{eqnarray*}
\int_{\Omega} (\I+ \nabla u) \Sigma(u) : \nabla \dot{u}\, \d \Omega & = &
\int_{\Omega}  \Sigma(u) : \frac{1}{2}\left((\I+ \nabla u)^T\nabla \dot{u} + \nabla \dot{u}^T(\I+\nabla u)\right) \d \Omega \\
& = & \int_{\Omega}  \check{\Sigma}(E(u)) : \left(E'(u). \dot{u}\right) \d \Omega 
 =  \frac{\d }{\d t}\int_{\Omega} \mathcal{W}(E(u))\, \d \Omega. 
\end{eqnarray*}
Then, integrating~\eqref{energy0} on $(0,t)$ yields
\begin{eqnarray*}
& & \frac{\rho}{2}\| \dot{u}(t) \|^2_{\LL^2(\Omega)} 
+ \int_{\Omega} \mathcal{W}(E(u(t)))\,\d\Omega
+ \kappa \int_0^{t} \|  \dot{u}(s)\|^p_{\VV^{p}(\Omega)}\d s \\ 
= & & 
\frac{\rho}{2}\| u_1 \|^2_{\LL^2(\Omega)} + 
\int_{\Omega} \mathcal{W}(E(u_0))\,\d\Omega +
\int_0^{t} \int_{\Omega} f(s)\cdot \dot{u}(s)\, \d \Omega \, \d s +
\int_0^{t}  \langle g(s)  ; \dot{u}(s) \rangle_{\VV^{2}(\Gamma_N)'; \VV^{2}(\Gamma_N)}  \d s, \\
 \leq & & 
 \frac{\rho}{2}\| u_1 \|^2_{\LL^2(\Omega)} + 
 \int_{\Omega} \mathcal{W}(E(u_0))\,\d\Omega +
 \frac{1}{2} \int_0^{t} \left(\| f(s)\|^2_{\LL^2(\Omega)} + 
\| g(s)\|^2_{\VV^{2}(\Gamma_N)'}\right)\d t + 
C\int_0^{t} \|\dot{u}(s)\|_{\LL^2(\Omega)}^2\d t .
\end{eqnarray*}
From inequality~\eqref{ineqA1} of assumption~$\mathbf{A1}$, combined with~\eqref{ineqKorn} which holds in particular because of~\eqref{ineqinv}, we get 
\begin{eqnarray*}
 CC_K \|u(t) \|^p_{\VV^p(\Omega)} 
\leq C \|E(u(t))\|^{p/2}_{\LLL^{p/2}(\Omega)} 
\leq \int_{\Omega} \mathcal{W}(E(u(t)))\,\d\Omega,
\end{eqnarray*}
where $C_0 \in \R$. The constant $C_K$ does not depend on time, because of~\eqref{ineqinv}. Combined with these inequalities, the energy estimate above then becomes
\begin{eqnarray*}
	& & \| \dot{u}(t) \|^2_{\LL^2(\Omega)} 
	+ \| u(t) \|^p_{\VV^p(\Omega)}
	+ \kappa \int_0^{t} \|  \dot{u}(s)\|^p_{\VV^{p}(\Omega)}\d s \\ 
 \leq & & 
 C\left( \| u_1 \|^2_{\LL^2(\Omega)} + 
 \int_{\Omega} \mathcal{W}(E(u_0))\,\d\Omega +
  \int_0^{t} \| f(s)\|^2_{\LL^2(\Omega)} \d s + 
 \int_0^{t} \| g(s)\|^2_{\VV^{2}(\Gamma_N)'}\d t \right)+ 
 C\int_0^{t} \|\dot{u}(s)\|_{\LL^2(\Omega)}^2\d s .
\end{eqnarray*}
The proof can be concluded with the Gr\"{o}nwall's lemma.
\end{proof}

A weak solution $w = \dot{u}$ for system~\eqref{mainsys} can be defined as in Definition~\ref{def3}, with $\kappa = 0$. Without ambiguity, we still call $u$ a weak solution, determined by $\dot{u}$ and $u_0$ through 
\begin{eqnarray*}
u(\cdot,t) & = & u_0 + \int_0^t \dot{u}(\cdot,s)\d s.
\end{eqnarray*}
The energy estimate of Proposition~\ref{propenergy} enables us to prove the main result of the paper.

\begin{theorem} \label{thfinal}
Let be $p>2$, $p \geq d$. Assume that the assumptions~$\mathbf{A1}$--$\mathbf{A4}$ are satisfied. For all $T>0$, there exists a constant $C>0$ such that, if
\begin{eqnarray}
\int_\Omega \mathcal{W}(E(u_0))\,\d \Omega + \|u_1\|_{\LL^2(\Omega))} + 
\| f \|_{\L^{2}(0,T;\LL^2(\Omega))} +
\| g \|_{\L^{2}(0,T;\VV^{2}(\Gamma_N)')} & \leq & C\exp(-CT), \label{smallness}
\end{eqnarray}
then system~\eqref{mainsys} admits a weak solution $\dot{u}$ in the sense of Definition~\ref{def3} such that
\begin{eqnarray*}
u \in \L^{\infty}(0,T; \VV^p(\Omega)), \quad
\dot{u} \in \L^{\infty}(0,T; \LL^2(\Omega)), \quad
\ddot{u} \in \L^{p'}(0,T; \VV^p(\Omega)').
\end{eqnarray*}
\end{theorem}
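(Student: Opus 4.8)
The plan is to construct the weak solution of \eqref{mainsys} as a weak-$*$ limit, when the regularization parameter $\kappa$ tends to $0$, of the solutions $u_\kappa$ of the regularized system \eqref{mainsysreg}, the whole point being that the energy estimate of Proposition~\ref{propenergy} is uniform in $\kappa$. First I would fix $\kappa>0$ and apply Theorem~\ref{thlocexist} to get a local-in-time solution $u_\kappa\in\W^{1,p}(0,T_0;\VV^p(\Omega))$ with $\ddot u_\kappa\in\L^{p'}(0,T_0;\VV^p(\Omega)')$. The two structural facts I would lean on are that the constant $C_0$ in \eqref{energy} does not depend on $\kappa$, and that the Korn constant $C_K$ in \eqref{ineqKorn} is universal as soon as $\det(\I+\nabla u_\kappa(\cdot,t))>0$ almost everywhere. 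The role of the smallness hypothesis \eqref{smallness} is exactly to tame the exponential factor $C_0\exp(C_0 t)$: choosing the constant $C$ in \eqref{smallness} suitably, the right-hand side of \eqref{energy} can be forced below any prescribed threshold $\delta>0$ on the whole of $[0,T]$, so that $\|u_\kappa(t)\|_{\VV^p(\Omega)}$ stays small, uniformly in $t\in[0,T]$ and in $\kappa$.

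The second step is a continuation (bootstrap) argument extending the existence time up to the target time $T$. Let $T_\kappa^*$ be the supremum of the times $t$ for which $u_\kappa$ exists and \eqref{ineqinv} holds on $[0,t)$. On $[0,T_\kappa^*)$ Proposition~\ref{propenergy} applies and yields the $\kappa$-uniform bound
\begin{eqnarray*}
\|u_\kappa\|_{\L^\infty(0,t;\VV^p(\Omega))} + \|\dot u_\kappa\|_{\L^\infty(0,t;\LL^2(\Omega))} + \kappa\,\|\dot u_\kappa\|^p_{\L^p(0,t;\VV^p(\Omega))} \ \leq \ \delta,
\end{eqnarray*}
while Lemma~\ref{welldef} bounds $(\I+\nabla u_\kappa)\Sigma(u_\kappa)$ in $\L^{p'}(0,t;\LLL^{p'}(\Omega))$, hence $\ddot u_\kappa$ in $\L^{p'}(0,t;\VV^p(\Omega)')$ through the equation. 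These bounds forbid any blow-up of the solution norm as $t\to T_\kappa^*$, so the continuation can only stop through loss of positivity of the determinant. To exclude this I would use Lemma~\ref{lemmaunifdet}: as $p\geq d$, the map $t\mapsto\det(\I+\nabla u_\kappa(\cdot,t))$ has a representative continuous into $\L^1(\Omega)$, whose variation is controlled by $\|\nabla u_\kappa\|_{\L^\infty(0,t;\LLL^p(\Omega))}$ and $\|\nabla\dot u_\kappa\|_{\L^p(0,t;\LLL^p(\Omega))}$; the smallness above keeps it close to $\det(\I+\nabla u_0)$, so positivity persists and $T_\kappa^*\geq T$. This coupling -- the energy estimate needs positivity to invoke Korn, while positivity is secured by the smallness produced by the energy estimate -- is the heart of the proof and, I expect, the main obstacle, the delicate point being that the only available control on the determinant is in $\L^1(\Omega)$ whereas one needs almost-everywhere positivity.

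The final step is the passage $\kappa\to0$. The uniform bounds provide, along a subsequence, $u_\kappa\stackrel{*}{\rightharpoonup}u$ in $\L^\infty(0,T;\VV^p(\Omega))$, $\dot u_\kappa\stackrel{*}{\rightharpoonup}\dot u$ in $\L^\infty(0,T;\LL^2(\Omega))$ and $\ddot u_\kappa\rightharpoonup\ddot u$ in $\L^{p'}(0,T;\VV^p(\Omega)')$, which already yields the announced regularity. The viscous term disappears in the limit: since $\kappa\,\||\nabla\dot u_\kappa|^{p-2}\nabla\dot u_\kappa\|_{\L^{p'}(0,T;\LLL^{p'}(\Omega))}=\kappa^{1/p}\big(\kappa\,\|\dot u_\kappa\|^p_{\L^p(0,T;\VV^p(\Omega))}\big)^{1/p'}=O(\kappa^{1/p})\to0$, its contribution to the weak formulation of Definition~\ref{def3} vanishes. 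It then remains to identify the nonlinear stress, that is to show $(\I+\nabla u_\kappa)\Sigma(u_\kappa)\rightharpoonup(\I+\nabla u)\Sigma(u)$; for this I would first extract, via an Aubin--Lions--Simon compactness argument in the spirit of Proposition~\ref{propcontraction} (using $u_\kappa$ bounded in $\L^\infty(0,T;\VV^p(\Omega))$ and $\dot u_\kappa$ in $\L^\infty(0,T;\LL^2(\Omega))$), strong convergence of $u_\kappa$, and then upgrade it to strong convergence of $\nabla u_\kappa$ in $\L^p(0,T;\LLL^p(\Omega))$ so as to apply the continuity of $\Sigma=\check\Sigma\circ E$ granted by assumption~$\mathbf{A3}$. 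Securing this strong convergence of the gradients is delicate precisely because the regularizing term only controls $\kappa\,\|\nabla\dot u_\kappa\|^p$, which degenerates as $\kappa\to0$; it is, together with the positivity bootstrap, the place where I expect the real work to lie. Passing to the limit in the remaining, linear terms of the weak formulation with $\kappa=0$ then delivers the desired weak solution.
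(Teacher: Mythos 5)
Your proposal follows essentially the same route as the paper: a bootstrap on the maximal time $T_{max}(\kappa)$ during which $\det(\I+\nabla u_\kappa)$ stays bounded below, the $\kappa$-uniform energy estimate of Proposition~\ref{propenergy} combined with the smallness hypothesis \eqref{smallness} to force $T_{max}(\kappa)\geq T$, the observation that the viscous term is $O(\kappa^{1/p})$ in $\L^{p'}(0,T;\LLL^{p'}(\Omega))$, and weak-* compactness as $\kappa\to0$. Two remarks on the points you flag as delicate. First, on obtaining almost-everywhere positivity of the determinant from only $\L^1$-continuity: the paper closes this not through Lemma~\ref{lemmaunifdet} alone but by invoking the bounds $\det(\I+\nabla v)\geq 1-\hat{C}\|v\|^d_{\VV^p(\Omega)}$ and $\det(\I+\nabla v)\leq 1+\hat{C}\|v\|^d_{\VV^p(\Omega)}$ (stated without proof), applied with $\|u_\kappa(T_{max}(\kappa))\|_{\VV^p(\Omega)}\leq\varepsilon$ and with $\|u_0\|_{\VV^p(\Omega)}$ small --- the latter being a consequence of $\int_\Omega\mathcal{W}(E(u_0))\,\d\Omega$ small via $\mathbf{A1}$ and \eqref{ineqKorn}; only then is the uniform continuity of Lemma~\ref{lemmaunifdet} used, to propagate the lower bound slightly past $T_{max}(\kappa)$ and contradict its maximality. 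If you want your bootstrap to be airtight you need some such pointwise (or at least a.e.) mechanism, since the estimate \eqref{estunifdet} by itself only controls the determinant in $\L^1(\Omega)$. Second, on identifying the limit of $(\I+\nabla u_\kappa)\Sigma(u_\kappa)$: your concern is legitimate --- weak-* convergence of $\nabla u_\kappa$ does not pass through the nonlinearity, and the Aubin--Lions--Simon argument you sketch yields strong convergence of $u_\kappa$ in $\L^p(0,T;\LL^p(\Omega))$ but not of $\nabla u_\kappa$ in $\L^p(0,T;\LLL^p(\Omega))$. The paper does not resolve this either: its Step~3 is the single sentence that one passes to the limit in the variational formulation. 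On this point your proposal is therefore no less complete than the paper's own proof, and you are right to single it out as the place where genuine additional work (a monotonicity, Minty-type, or compensated-compactness argument) would be required.
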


\begin{proof}
	Let $T>0$ be arbitrary. For $\kappa >0$, we denote by $\dot{u}_\kappa$ the solution provided on $[0,T_0]$ by Theorem~\ref{thlocexist}.\\
	
	\textit{Step 1.} Consider $\eta \in \L^1(\Omega)$ such that $0 < \eta < \det(\I+\nabla u_0)$ almost everywhere in $\Omega$. For instance, we choose $\eta = \frac{1}{2}\det(\I+\nabla u_0)$. We define
	\begin{eqnarray*}
		T_{max}(\kappa) & = & \sup \left\{ T_0 >0, \text{ such that $\dot{u}_\kappa$ satisfies~\eqref{mainsysreg} on $[0,T_0]$, in the sense of Definition~\ref{def3}, and}\right. \\
		& & \qquad \ \text{for all $t \in [0,T_0)$, }\left.\det(\I+ \nabla u_\kappa(\cdot,t)) \geq \eta \text{ almost everywhere in } \Omega
		\right\}.
	\end{eqnarray*}
	Since $\det(\I+\nabla u_0) >0$, Lemma~\ref{lemmaunifdet} paired with Theorem~\ref{thlocexist} shows that $T_{max}(\kappa) >0$. Assume that $T_{max}(\kappa) < T$. We will show that this leads to a contradiction, under the announced smallness assumption on the data. Estimate~\eqref{energy} of Proposition~\ref{propenergy} -- used for $t = T_{max}(\kappa)$ -- shows that the functions $u_\kappa(T_{max}(\kappa))$ and $\dot{u}_\kappa(T_{max}(\kappa))$ are in $\VV^p(\Omega)$ and $\LL^2(\Omega)$, respectively. Hence, Theorem~\ref{thlocexist} enables us to extend $u_\kappa$ on the interval $[T_{max}(\kappa) , T_{max}(\kappa)+\tau(\kappa))$ for some $\tau(\kappa) >0$. On the other hand, estimate~\eqref{energy} shows also that, for all $\varepsilon >0$, the data can be chosen small enough, namely 
	\begin{eqnarray*}
	\int_\Omega \mathcal{W}(E(u_0))\,\d \Omega + \|u_1\|_{\LL^2(\Omega))} + 
	\| f \|_{\L^{2}(0,T;\LL^2(\Omega))} +
	\| g \|_{\L^{2}(0,T;\VV^{2}(\Gamma_N)')} & \leq & \frac{\varepsilon^p}{C_0}\exp(-C_0T)
	\end{eqnarray*}
	in order to have $\| u_\kappa(T_{max}(\kappa) \|_{\VV^p(\Omega)}  \leq  \varepsilon$. Since there exists $\hat{C}>0$ independent of $v\in \VV^p(\Omega)$ such that
	\begin{eqnarray*}
		\det(\I+\nabla v  ) \geq 1 -\hat{C}\| v\|^d_{\VV^p(\Omega)} & \text{ and } &
		\det(\I+\nabla v) \leq 1 + \hat{C}\| v\|^d_{\VV^p(\Omega)},
	\end{eqnarray*}
	we can choose $\varepsilon >0$ small enough, and further decrease $\| u_0\|_{\VV^p(\Omega)}$ if necessary in order to have
	\begin{eqnarray*}
	 \det(\I+\nabla u_\kappa(T_{max}(\kappa))) - \frac{2}{3}\det(\I+\nabla u_0)  \geq 
		\frac{1}{3} - \hat{C}\left(\frac{2}{3}\| u_0\|^d_{\VV^p(\Omega)} + 
		\varepsilon^d \right) & \geq & 0,\\
	 \det(\I+\nabla u_\kappa(T_{max}(\kappa))) & \geq & \frac{2}{3}\det(\I+\nabla u_0).
	\end{eqnarray*}
	Note that smallness for $\| u_0\|_{\VV^p(\Omega)}$ is implied by $\int_\Omega \mathcal{W}(E(u_0))\, \d \Omega$ small, due to assumption~$\mathbf{A1}$ and the Korn's inequality~\eqref{ineqKorn}. Then, by continuity (see Lemma~\ref{lemmaunifdet}), we can choose $\tau(\kappa) >0$ small enough in order to have
	\begin{eqnarray*}
	\det(\I+\nabla u_\kappa(t)) & \geq & \frac{1}{2}\det(\I+\nabla u_0), \qquad
	\text{for all } t \in [T_{max}(\kappa), T_{max}(\kappa)+\tau(\kappa) ).
	\end{eqnarray*}
	This contradicts the definition of $T_{max}(\kappa)$ as an upper bound. Thus, under the hypothesis~\eqref{smallness}, one can assume that $T_{max}(\kappa) \geq T$, for all $\kappa >0$.\\
	
	\textit{Step 2.} In order to make $\kappa$ tend to zero, estimate~\eqref{energy} gives us a $\kappa$-independent bound on $u_\kappa$ in $\L^{\infty}(0,T;\VV^p(\Omega))$, and on $\dot{u}_\kappa$ in $\L^{\infty}(0,T;\LL^2(\Omega))$. We still need a bound on $\ddot{u}_\kappa$ in $\L^{p'}(0,T;\VV^p(\Omega)')$. The variational formulation of system~\eqref{mainsysreg} -- given in Definition~\ref{def3} -- shows that we have
	\begin{eqnarray*}
		\rho \|\ddot{u}_\kappa\|_{\VV^p(\Omega)'} & \leq & 
		\kappa \| |\nabla \dot{u}_{\kappa} |^{p-1} \|_{\L^{p'}(\Omega)} 
		+ \| f \|_{\VV^p(\Omega)'}
		+ \| g \|_{\VV^p(\Gamma)'} 
		+ \| (\I + \nabla u_\kappa) \Sigma(u_{\kappa}) \|_{\LLL^{p'}(\Omega)}.
	\end{eqnarray*}
	As in the proof of Lemma~\ref{welldef}, we can estimate the last term of the right-hand-side with the use of the estimate~\eqref{ineqA3} of assumption~$\mathbf{A3}$, as follows
	\begin{eqnarray}
		\rho \|\ddot{u}_\kappa\|_{\VV^p(\Omega)'} & \leq & 
		\kappa \|  \dot{u}_{\kappa}  \|^{p-1}_{\VV^{p}(\Omega)} 
		+ \| f \|_{\VV^p(\Omega)'}
		+ \| g \|_{\VV^p(\Gamma)'} 
		+  \| (\I + \nabla u_\kappa) \|_{\LLL^p(\Omega)}
		\| \Sigma(u_{\kappa}) \|_{\LLL^{(p/2)'}(\Omega)}, \nonumber \\
		\rho \|\ddot{u}_\kappa\|_{\L^{p'}(0,T;\VV^p(\Omega)')} & \leq & 
		\kappa \|  \dot{u}_{\kappa}  \|^{p-1}_{\L^p(0,T;\VV^{p}(\Omega))} 
		+ \| f \|_{\L^{p'}(0,T;\VV^p(\Omega)')}
		+ \| g \|_{\L^{p'}(0,T;\VV^p(\Gamma)')} \nonumber \\ 
		& & +  \| (\I + \nabla u_\kappa) \|_{\L^p(0,T;\LLL^p(\Omega))}
		\| \Sigma(u_{\kappa}) \|_{\L^{(p/2)'}(0,T;\LLL^{(p/2)'}(\Omega))} \nonumber \\
		& \leq & 
		\kappa \|  \dot{u}_{\kappa}  \|^{p-1}_{\L^p(0,T;\VV^{p}(\Omega))} 
		+ \| f \|_{\L^{p'}(0,T;\VV^p(\Omega)')}
		+ \| g \|_{\L^{p'}(0,T;\VV^p(\Gamma)')} \nonumber \\ 
		& & + C(T)\left(1+  \|  u_\kappa \|_{\L^p(0,T;\VV^p(\Omega))}\right)
		\left(1+ \| u_{\kappa} \|^\alpha_{\L^{p}(0,T;\VV^{p}(\Omega))}\right), \label{estuddot}
	\end{eqnarray}
	where $\alpha = \min(1,(p-2)/2)$, and where the constant $C(T)$ is non-decreasing with respect to $T$, depends only on the bound of $\|u_\kappa \|_{\L^{p}(0,T;\VV^p(\Omega))}$, which is controlled by $\|u_\kappa \|_{\L^{\infty}(0,T;\VV^p(\Omega))}$ as follows
	\begin{eqnarray*}
	\|u_\kappa \|_{\L^{p}(0,T;\VV^p(\Omega))} & \leq & 
	T^{1/p}\|u_\kappa \|_{\L^{\infty}(0,T;\VV^p(\Omega))}.
	\end{eqnarray*}
	From~\eqref{energy}, the sequence $\left\{\|u_\kappa \|_{\L^{\infty}(0,T;\VV^p(\Omega))}; \kappa >0 \right\}$ is bounded independently of $\kappa$. Since we have
	\begin{eqnarray*}
	\kappa \|  \dot{u}_{\kappa}  \|^{p-1}_{\L^p(0,T;\VV^{p}(\Omega))} & = & \kappa^{1/p}
	\left(\kappa \|  \dot{u}_{\kappa}  \|^{p}_{\L^p(0,T;\VV^{p}(\Omega))}\right)^{1-1/p},
	\end{eqnarray*}
	the sequence $\left\{\kappa \|  \dot{u}_{\kappa}  \|^{p-1}_{\L^p(0,T;\VV^{p}(\Omega))}: \, \kappa \in (0,1] \right\}$ is bounded as well. Thus, from~\eqref{estuddot}, the sequence\\ $\left\{\|\ddot{u}_\kappa\|_{\L^{p'}(0,T;\VV^p(\Omega)')}: \, \kappa \in (0,1] \right\}$ is also bounded. By the Banach-Alaoglu theorem (see for instance~\cite{Rudin}, section~3.17), up to extraction of a subsequence, when $\kappa$ goes to zero the sequence $\{u_\kappa : \, \kappa \in (0,1] \}$ converges weakly-* to some $u$ such that
	\begin{eqnarray*}
	u\in \L^{\infty}(0,T;\VV^p(\Omega)), \quad  \dot{u} \in \L^{\infty}(0,T;\LL^2(\Omega)),
	\quad \ddot{u} \in \L^{p'}(0,T; \VV^p(\Omega)').
	\end{eqnarray*}
	
	\textit{Step 3.} By passing to the limit in the variational formulation of Definition~\ref{def3}, we see that $\dot{u}$ is a weak solution of system~\eqref{mainsys}. 
\end{proof}

 
\section*{Acknowledgments}
The authors gratefully acknowledge support by the Austrian Science Fund (FWF) special
research grant SFB-F32 "Mathematical Optimization and Applications in Biomedical
Sciences", and the Austrian Academy of Sciences (OAW).

\bibliographystyle{alpha}
\bibliography{Existence-elasticity_ref}

\end{document}